\documentclass[a4paper,10pt]{article}
\usepackage[utf8x]{inputenc}

\usepackage{amssymb}
\usepackage{amsmath,amsfonts,amsthm,amstext}
\usepackage[all,cmtip]{xy}
\usepackage[dvips]{graphicx}
\usepackage{graphics}
\usepackage{calrsfs}
\usepackage{hyperref}
\usepackage{xcolor}

\newtheorem{theorem}{Theorem}[section]
\newtheorem{lemma}[theorem]{Lemma}
\newtheorem{prop}[theorem]{Proposition}

\newtheorem{crlr}[theorem]{Corollary}

\theoremstyle{definition}
\newtheorem{defi}[theorem]{Definition}

\newcommand{\mb}[1]{\mathbb{#1}}

\newcommand{\ten}{\hat{\otimes}}
\newcommand{\ext}{\hat{\wedge}}

\def\X{\mathfrak{X}}
\begin{document}
\title{Weak commutativity for pro-$p$ groups}
 \author{Dessislava H. Kochloukova, Lu\'is Mendon\c{c}a\\
 	Department of Mathematics, \\ State University of Campinas, Brazil}

 \date{}
\maketitle
 

\begin{abstract}
 We define and study a pro-$p$ version of Sidki's weak commutativity construction. This is the pro-$p$ group $\X_p(G)$ generated by two copies $G$ and $G^{\psi}$ of a pro-$p$ group, subject to the defining relators $[g,g^{\psi}]$ for all $g \in G$. We show for instance that if $G$ is finitely presented or analytic pro-$p$, then $\X_p(G)$ has the same property.  Furthermore we study properties of the non-abelian tensor product and the pro-$p$ version of Rocco's construction $\nu(H)$. We also study finiteness properties of subdirect products of pro-$p$ groups. In particular we prove a pro-$p$ version of the $(n-1)-n-(n+1)$ Theorem. 
\end{abstract}

\begin{section}{Introduction} 

In \cite{Sid} Sidki defined for an arbitrary discrete group $H$ the group $\X(H)$ and initiated the study of this  construction. The case when $H$ is nilpotent was studied  by Gupta, Rocco and Sidki in \cite{GupRocSid}, \cite{Sid}. There are links between the weak commutativity construction and homology, in particular Rocco showed in \cite{Norai2} that the Schur multiplier of $H$ is a subquotient of $\X(H)$ isomorphic to $W(H)/R(H)$, where $W(H)$ and $R(H)$ are special normal subgroups of $\X(H)$ defined in \cite{Sid}. Recently Lima and Oliveira used in \cite{L-O} the Schur multiplier of $H$  to show that for any virtually polycyclic group $H$ the group  $\X(H)$  is virtually polycyclic too.  The use of homological methods in the study of $\X(H)$ was further developed by Bridson, Kochloukova and Sidki in \cite{BriKoc}, \cite{KocSid}, where finite presentability and the homological finiteness type $FP_m$ of $\X(H)$ were studied. Recently  Kochloukova and Mendon\c{c}a calculated in \cite{K-M} low dimensional cases of the Bieri-Strebel-Neumann-Renz $\Sigma$-invariants of $\X(H)$.

In this paper we study a pro-$p$ version of the construction $\X(H)$ for a fixed prime $p$. 
	Let $G$ be a pro-$p$ group. We define $\X_p(G)$ by the pro-$p$ presentation
	\[ \X_p(G) = \langle G, G^{\psi} \hbox{ } | \hbox{ } [g,g^{\psi}]=1 \hbox{ for all } g \in G\rangle_p,\]
	where $G^{\psi}$ is an isomorphic copy of $G$ via $g \mapsto g^{\psi}$ and $\langle - \mid  - \rangle_p$ denotes presentation by generators and relators in the category of pro-$p$ groups.
	
	We develop structure theory of $\X_p(G)$ similar to the discrete case by defining special normal subgroups $D_p(G)$, $L_p(G)$, $W_p(G)$ and $R_p(G)$. The structure theory we develop shows that $\X_p(G)/ W_p(G)$ is a subdirect product of $G \times G \times G$. We need a criterion for homological finiteness properties $FP_m$ of pro-$p$ subdirect products of pro-$p$ groups. The case of a pro-$p$ subdirect product  inside a direct product of free pro-$p$ or Demushkin group was considered by Kochloukova and Short in \cite{KocSho}. The    homological finiteness properties $FP_m$ of subdirect products
	in the abstract case were studied by Bridson, Howie, Kuckuck, Miller and  Short in \cite{B-H-M-S}, \cite{Kuc} and were reduced to the Fibre Product Conjecture that treats the homological finiteness type of a fibre product of groups and is known to hold  in small dimensions \cite{B-H-M-S}. We show in Section \ref{pro-p-fibre} that a version of  the Fibre Product Conjecture for pro-$p$ groups holds in any dimension and it is a corollary of the methods  already developed for abstract groups by Kuckuck in \cite{Kuc}.
	
	\medskip{\bf The $(n-1)-n-(n+1)$ Theorem for pro-$p$ groups} {\it Let $p_1 : G_1 \to Q$ and $p_2 : G_2 \to Q$ be surjective homomorphisms of pro-$p$ groups. Suppose that $Ker (p_1)$ is of type $FP_{n-1}$, both $G_1$ and $G_2$ are of type $FP_n$ and $Q$ is of type $FP_{n+1}$. Then the fiber pro-$p$ product
	$$
	G = \{ (g_1, g_2) \mid g_1 \in G_1, g_2 \in G_2, p_1(g_1) = p_2(g_2) \}
	$$
	has type $FP_n$.}

\medskip As a corollary we deduce the following result.

\medskip
{\bf Corollary A} {\it 	Let $G_1, \ldots, G_n$ be pro-$p$ groups of type $FP_k$ for some $n \geq 1$. Denote by $p_{i_1, \ldots, i_k}$ the projection $G_1 \times \dots \times G_n \twoheadrightarrow G_{i_1} \times \dots \times G_{i_k}$ for $1 \leq i_1 <  \ldots <  i_k\ \leq n $. Let $H \subseteq G_1 \times \dots \times G_n$ be a closed subgroup such that $p_{i_1, \ldots, i_k}(H)$ is of finite index in $G_{i_1} \times \dots \times G_{i_k}$ for all   $1 \leq i_1 <  \ldots <  i_k\ \leq n $.
 Then $H$ is of type $FP_k$.  }

\medskip
We show that the construction $\X_p(G)$ has similar properties to the original weak commutativity construction $\X(H)$ for a discrete group $H$. 

\medskip
{\bf Theorem B} {\it Let $G$ be a pro-$p$ group. Then

1. If $G$ is $p$-finite then $\X(G) \simeq \X_p(G)$;

2. If $\mathcal{P}$ is one of the following classes of pro-$p$ groups : soluble; finitely generated nilpotent; finitely presented; poly-procyclic; analytic pro-$p$  and $G \in \mathcal{P}$ then $\X_p(G) \in \mathcal{P}$;

3. If $G$ is a free non-procyclic finitely generated pro-$p$ group then $\X_p(G)$ is not of homological type $FP_3$.}

\medskip
In order to prove Theorem B  we use the notion of non-abelian tensor product of pro-$p$ groups developed by Moravec in \cite{Mor} and we show that $H_2(G, \mathbb{Z}_p)$ is a subquotient of $\X_p(G)$. More precisely it is isomorphic to $W_p(G)/ R_p(G)$.  In the case of discrete groups the non-abelian tensor product was introduced by Brown and Loday in \cite{B-L}, following ideas of Dennis in \cite{D}. The non-abelian tensor product $H \otimes H$ for discrete groups $H$ is linked with the Rocco construction $\nu(H) = \langle H, H^{\psi} \mid [h_1, h_2^{\psi}]^{h_3} = [ h_1^{h_3}, (h_2^{h_3})^{\psi}] = [h_1, h_2^{\psi}]^{h_3^{\psi}}\rangle$ defined in  \cite{Norai}. In section \ref{sectionRocco} we define a pro-$p$ version $\nu_p(G)$ for a pro-$p$ group $G$ and study its properties.

In part 2 of Theorem B we consider several classes of pro-$p$ groups, including the class of analytic pro-$p$ groups introduced  first by Lazard in \cite{L}. A pure group theoretic approach to analytic pro-$p$ groups was developed by  Lubotzky, Mann, du Sautoy and Segal in \cite{bookDSMS}. We show that the constructions $\X_p$ and $\nu_p$ preserve analytic pro-$p$ groups. Furthermore we show that part 2 of Theorem B has a version for non-abelian tensor product of pro-p groups and the $\nu_p$-construction.

\medskip
{\bf Theorem C} {\it If $\mathcal{P}$ is one of the following classes of pro-$p$ groups: soluble; finitely generated nilpotent; finitely presented; poly-procyclic; analytic pro-$p$  and $G \in \mathcal{P}$ then $\nu_p(G) \in \mathcal{P}$ and, except in the case when ${\mathcal P}$  is the class of finitely presented groups, the non-abelian tensor pro-$p$ product $G \widehat{\otimes} G \in \mathcal{P}$.}

\medskip
{\bf Acknowledgements} During the preparation of this work the first named author was partially supported by CNPq grant 301779/2017-1  and by FAPESP grant 2018/23690-6. The second named author was supported by PhD grant FAPESP 2015/22064-6.
\end{section}

\begin{section}{Preliminaries on homological properties of pro-$p$ groups}

Throughout this paper $p$ denotes a fixed  prime number.
Let $G$ be  a pro-$p$ group. Thus $$G = \varprojlim G/U,$$
where $U$ runs through all finite quotients $G/U$ of $G$ and note that these finite quotients are actually finite $p$-groups. The completed group algebra is defined as
$$ \mathbb{Z}_p[[G]] = \varprojlim \mathbb{Z}_p[G/U],$$
where $U$ runs through all finite quotients $G/U$ of $G$.
There is a homology and cohomology theory for pro-$p$ groups, more generally for profinite
groups, developed in \cite{RibZal}. In particular there are free resolutions of pro-$p$ $\mathbb{Z}_p[[G]]$-modules $\mathcal P$ that can be used to calculate continuous homology groups. By definition for a left pro-$p$ $\mathbb{Z}_p[[G]]$-module $A$ the continuous homology group $H_i(G, A)$ is the homology of the complex ${\mathcal P} \widehat{\otimes}_{\mathbb{Z}_p[[G]]} A$, where $\widehat{\otimes}$ denotes the completed tensor product.

 A pro-$p$ group $G$ has homological type $FP_m$ if the trivial $\mathbb{Z}_p[[G]]$-module $\mathbb{Z}_p$ has a continuous free resolution with all free modules finitely generated in dimension $\leq m$. This turns out to be equivalent in the case of a pro-$p$ group $G$ to all continuous homology groups $H_i(G, \mathbb{F}_p)$  being finite for $i \leq m$ \cite{Kin}. This is equivalent to all continuous homology groups $H_i(G, \mathbb{Z}_p)$ being finitely generated pro-$p$ groups for $i \leq m$ \cite{Kin}. Note that for discrete groups this equivalence is not valid; the property  $FP_m$ implies that the homology groups are finitely generated as discrete groups but the converse does not hold.
 
 Note that for pro-$p$ groups $G$ finite presentability is equivalent to the finiteness of both $H^1(G, \mathbb{F}_p)$
and $H^2(G, \mathbb{F}_p)$. And this is equivalent to the finiteness of both $H_1(G, \mathbb{F}_p)$
and $H_2(G, \mathbb{F}_p)$. Thus $G$ is a finitely presented pro-$p$ group if and only if $G$ is $FP_2$. Note that this is a very specific property of pro-$p$ groups that does not hold for discrete groups as Bestvina and Brady have shown in \cite{B-B} examples of discrete groups of type $FP_2$ that are not finitely presented.

\section{Pro-$p$ fibre and subdirect products} \label{pro-p-fibre}

In \cite{Kuc} Kuckuck discussed in great details when the fiber product of abstract groups has homotopical type $F_m$. The general case is still open, but in \cite{Kuc} the new notion of weak $FP_m$ type was introduced and more results for this weaker homological type were proved. 
By definition a discrete group $H$ is $wFP_m$ ( i.e. weak $FP_m$) if for every subgroup $H_0$ of finite index in $H$ we have that $H_i(H_0, \mathbb{Z})$ is finitely generated for all $i \leq m$.

Here we show that in the case of pro-$p$ groups it is relatively easy to obtain a general result for the homological type of a fiber product. The explanation for this is that for a pro-$p$ group $G$  we have that $G$ is of type $FP_m$ if and only if $H_i(G, \mathbb{Z}_p)$ is finitely generated as a pro-$p$ group  for all $ i \leq m$. Thus $G$ is of type $FP_m$ if and only if $H_i(G_0, \mathbb{Z}_p)$ is finitely generated as a pro-$p$ group  for all $ i \leq m$ and all pro-$p$ subgroups $G_0$ of finite index in $G$ . The last condition is a pro-p version of the  definition of homological type weak $FP_m$ (denoted $wFP_m$ by Kuckuck in \cite{Kuc}), thus for pro-$p$ groups the notions of type $FP_m$ and weak $FP_m$ are the same.

	\medskip{\bf The $(n-1)-n-(n+1)$ Theorem for pro-$p$ groups} {\it Let $p_1 : G_1 \to Q$ and $p_2 : G_2 \to Q$ be surjective homomorphisms of pro-$p$ groups. Suppose that $Ker (p_1)$ is of type $FP_{n-1}$, both $G_1$ and $G_2$ are of type $FP_n$ and $Q$ is of type $FP_{n+1}$. Then the fiber pro-$p$ product
	$$
	G = \{ (g_1, g_2) \mid g_1 \in G_1, g_2 \in G_2, p_1(g_1) = p_2(g_2) \}
	$$
	has type $FP_n$.}

\medskip \begin{proof}  The proof of the following claim is identical with the proof of \cite[Lemma~5.3]{Kuc}
	subject to changing the category of discrete groups with the category of pro-$p$ groups, discrete homology with continuous homology, weak $FP_m$ for discrete groups with $FP_m$ for pro-$p$ groups and coefficient module $\mathbb{Z}$ in the discrete case with coefficient module $\mathbb{F}_p$ in the pro-$p$ case. The proof of  \cite[Lemma~5.3]{Kuc}
	is homological and uses spectral sequences.
	
	\medskip
	{\bf Claim} {\it Let $N \to \Gamma \to Q$ be a short exact sequence of pro-$p$ groups, where $N$ is $FP_{n-1}$, and $Q$ is $FP_{n+1}$. Then $H_0(Q, H_n(N, \mathbb{F}_p))$ is finite if and only if  $\Gamma$ is $FP_n$.}
	
	\medskip
	By the above Claim applied for the short exact sequence $Ker (p_1) \to G_1 \to Q$ we deduce that 
	\begin{equation} \label{border}
	H_0(Q, H_n(Ker(p_1), \mathbb{F}_p)) \hbox{ is finite.}
	\end{equation}
	Then we follow the proof of \cite[Thm.~5.4]{Kuc} changing the coefficient module in the homology groups to $\mathbb{F}_p$ and consider the short exact sequence of groups
	\begin{equation}
	Ker (p_1) \to G \to G_2
	\end{equation} 
	and the corresponding LHS spectral sequences
	$$
	E_{i,j}^2 = H_i(G_2, H_j(Ker (p_1), \mathbb{F}_p)) \hbox{ converging to } H_{i+j}(G, \mathbb{F}_p).
	$$
	By construction \begin{equation} E_{i,j}^2 \hbox{ is finite for }i \leq n, j \leq n-1. \end{equation}  Indeed, since $Ker(p_1)$ is $FP_{n-1}$, $H_j(Ker (p_1), \mathbb{F}_p)$ is a finite $\mathbb{Z}_p[[G_2]]$-module for $j \leq n-1$ and has a filtration, where each quotient $W_s$ is a simple $\mathbb{Z}_p[[G_2]]$-module.
	Any simple  $\mathbb{Z}_p[[G_2]]$-module is isomorphic to the trivial $\mathbb{Z}_p[[G_2]]$-module $\mathbb{F}_p$ and since $G_2$ is $FP_n$ we see that $H_i(G_2, \mathbb{F}_p)$ is finite for $i \leq n$. Hence $H_i(G_2, W_s)$ is finite for every $s$ and so $	E_{i,j}^2$ is finite for $i \leq n, j \leq n-1$.

	Note that by (\ref{border})
	$$
	E_{0,n}^2 = H_0(G_2, H_n(Ker (p_1), \mathbb{F}_p)) = H_0(Q, H_n(Ker (p_1), \mathbb{F}_p)) \hbox{ is finite.}
	$$
	Hence by the convergence of the spectral sequence $H_k(G, \mathbb{F}_p)$ is finite for $k \leq n$, so $G$ has type $FP_n$.
	\end{proof}

In a pro-$p$ group $G$ for a subset $S$ of $G$ we denote by $\overline{\langle S \rangle}$ the  pro-$p$ subgroup of $G$ generated by $S$ and
we denote by $G'$ the commutator subgroup $\overline{[G,G]}$.

\begin{crlr} \label{homology} Let $H$ be a finitely presented pro-$p$ group   and $S = \overline{\langle (h, h^{-1}) \mid h \in H \rangle} \leq H \times H$. Suppose that $H'$ is finitely generated as a pro-$p$ group. Then $S$ is a finitely presented pro-$p$ group.
\end{crlr}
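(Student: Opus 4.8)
\emph{Plan.} The strategy is to identify $S$ with a fibre pro-$p$ product to which the $(n-1)-n-(n+1)$ Theorem applies with $n=2$, exploiting that for a pro-$p$ group finite presentability coincides with type $FP_2$. Let $q \colon H \to H/H'$ be the abelianization map and set
\[ T = \{ (a,b) \in H \times H \mid ab \in H' \}. \]
Writing the operation on $H/H'$ additively, the defining condition reads $q(a) + q(b) = 0$, which is preserved under multiplication, so $T$ is a closed subgroup of $H \times H$. Each generator $(h,h^{-1})$ of $S$ satisfies $hh^{-1} = 1 \in H'$, hence $S \subseteq T$. Moreover $T$ is precisely the fibre product of the two surjections $p_1 = q \colon H \to H/H'$ and $p_2 = q' \colon H \to H/H'$, where $q'(b) = q(b)^{-1}$; in particular $Ker(p_1) = H'$.

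The key step is the reverse inclusion $T \subseteq S$, i.e. that the anti-diagonal elements topologically generate all of $T$. I would first establish $1 \times H' \subseteq S$. For $g,h \in H$ both $(g,g^{-1})(h,h^{-1}) = (gh,\, g^{-1}h^{-1})$ and $(gh,(gh)^{-1}) = (gh,\, h^{-1}g^{-1})$ lie in $S$, and their quotient is
\[ (gh,\, h^{-1}g^{-1})^{-1}(gh,\, g^{-1}h^{-1}) = (1,\; ghg^{-1}h^{-1}) \in S. \]
As $g,h$ range over $H$ the elements $ghg^{-1}h^{-1}$ topologically generate $H'$, so $1 \times H' \subseteq S$. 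Since $\pi_1$ (the first projection) is continuous and sends the topological generators onto all of $H$, we have $\pi_1(S) = H$. These two facts force $S = T$ by a standard argument: given $(a,b) \in T$, pick $(a,b') \in S$; then $(a,b')^{-1}(a,b) = (1,\, b'^{-1}b) \in T$, so $b'^{-1}b \in H'$ and $(1, b'^{-1}b) \in 1 \times H' \subseteq S$, whence $(a,b) = (a,b')(1,b'^{-1}b) \in S$.

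With $S = T$ identified as a fibre product I would verify the hypotheses of the $(n-1)-n-(n+1)$ Theorem for $G_1 = G_2 = H$, $Q = H/H'$ and $n = 2$. The kernel $Ker(p_1) = H'$ is of type $FP_1$, i.e. finitely generated, by assumption; $G_1 = G_2 = H$ is of type $FP_2$ because $H$ is finitely presented; and $Q = H/H'$ is a finitely generated abelian pro-$p$ group, hence analytic pro-$p$ and therefore of type $FP_\infty$, in particular $FP_3$. The Theorem then gives that $S$ is of type $FP_2$, which for pro-$p$ groups is equivalent to finite presentability.

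The only delicate point is the intrinsic description $S = T$. Once the short commutator computation produces $1 \times H' \subseteq S$, the equality and the subsequent application of the $(n-1)-n-(n+1)$ Theorem are entirely formal; note that the hypothesis on $H'$ enters exactly as the $FP_{n-1}$ condition on the kernel, so it cannot be dropped.
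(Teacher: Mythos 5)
Your proposal is correct and follows essentially the same route as the paper: both identify $S$ with the fibre product of the abelianization map $p_1\colon H\to H/H'$ and its composite with the inversion automorphism of $H/H'$, and then apply the $(n-1)-n-(n+1)$ Theorem with $n=2$, using that $FP_2$ equals finite presentability for pro-$p$ groups. The only difference is that you spell out the verification that $S$ actually coincides with that fibre product (via $1\times H'\subseteq S$), a point the paper asserts without proof.
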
 

\begin{proof} Note that $H'\times H' \subseteq S$. Let $p_1 : H \to Q = H/ H'$ be the canonical projection and $p_2 = \sigma \circ p_1$, where $\sigma : Q \to Q$ is the antipodal homomorphism sending $g$ to $g^{-1}$.Then $S$ is the fiber product of the maps $p_1$ and $p_2$ and we can apply the $(n-1)-n-(n+1)$ Theorem for pro-$p$ groups for $n = 2$ to deduce that $S$ has type $FP_2$, that for a pro-$p$ group is the same as being finitely presented.
\end{proof}

{\bf Proof of Corollary A}
	This is a pro-$p$ version of Corollary 5.5 in \cite{Kuc}. As in Corollary 5.5 in \cite{Kuc} there are suitable ways of decomposing $H$ as a fibre product  and then we can apply  the $(n-1)-n-(n+1)$ Theorem for pro-$p$ groups. 

\end{section}

\begin{section}{Finite presentability and completions}

\begin{defi}
Let $G$ be a pro-$p$ group. We define $\X_p(G)$ by the pro-$p$ presentation
\[ \X_p(G) = \langle G, G^{\psi} \hbox{ } | \hbox{ } [g,g^{\psi}] \hbox{ for  } g \in G\rangle_p,\]
where $G^{\psi}$ is an isomorphic copy of $G$ via $g \mapsto g^{\psi}$  and $\langle - | - \rangle_p$ denotes presentation by  generators and relators in the category of pro-$p$ groups. 
\end{defi}
 
We denote by $\X(H)$ the original Sidki's weak commutativity construction \cite{Sid} for a discrete group $H$, i.e. 
\[ \X(H) = \langle H, H^{\psi} \hbox{ } | \hbox{ } [h,h^{\psi}] \hbox{ for  } h \in H\rangle, 
\] where $H^{\psi}$ is an isomorphic copy of $H$ via $h \mapsto h^{\psi}$ and $\langle - | - \rangle$ denotes presentation by generators and relators in the category of discrete groups. 
 
\begin{lemma} \label{finite-p}
For a finite $p$-group $P$ we have $\X(P) \simeq \X_p(P)$.
\end{lemma}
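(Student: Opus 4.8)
The plan is to realise $\X_p(P)$ as the pro-$p$ completion of the discrete group $\X(P)$, and then to show that $\X(P)$ is already a finite $p$-group, so that it coincides with its own completion. For the first step I would invoke the universal property of the pro-$p$ presentation. Since $P$ is finite, it is presented as a discrete group by its (finite) multiplication table; hence $\X(P)$ admits the finite presentation with generating set $P \sqcup P^{\psi}$, the two multiplication tables of $P$ and $P^{\psi}$, and the finitely many relators $[g,g^{\psi}]$ for $g \in P$. The pro-$p$ completion functor is left adjoint to the inclusion of pro-$p$ groups into groups, so it carries a finite presentation to the pro-$p$ group on the same generators and relators; as $P$ is its own pro-$p$ completion, this pro-$p$ group is precisely $\X_p(P)$. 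Thus the canonical map $\X(P) \to \X_p(P)$ exhibits $\X_p(P)$ as the pro-$p$ completion of $\X(P)$.

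Second, I would prove that $\X(P)$ is a finite $p$-group, relying on the known structure theory of the discrete construction from \cite{Sid}, \cite{GupRocSid}, \cite{Norai}. The three homomorphisms $\X(P) \to P$ given on generators by $h \mapsto h,\ h^{\psi} \mapsto 1$; by $h \mapsto 1,\ h^{\psi} \mapsto h$; and by $h \mapsto h,\ h^{\psi} \mapsto h$ are well defined (each kills every $[g,g^{\psi}]$) and assemble into a map $\X(P) \to P \times P \times P$ whose image is a subdirect product and whose kernel is the discrete analogue $W(P)$ of the subgroup $W_p$ from the introduction. Hence $\X(P)/W(P)$ embeds in $P^{3}$ and is a finite $p$-group. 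It remains to control $W(P)$, which by Sidki's and Rocco's analysis is governed by the non-abelian tensor square $P \otimes P$. The latter is finite for finite $P$, and its exponent is a power of $p$, so by Cauchy it is a finite $p$-group. Being an extension of the finite $p$-group $\X(P)/W(P)$ by the finite $p$-group $W(P)$, the group $\X(P)$ is a finite $p$-group.

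Finally, a finite $p$-group is complete in its pro-$p$ topology and equals its own pro-$p$ completion, so combining this with the first step shows that the canonical map $\X(P) \to \X_p(P)$ is an isomorphism. The main obstacle is the middle step, and more precisely the finiteness and $p$-group property of $W(P)$: everything reduces to the fact that the non-abelian tensor square of a finite $p$-group is a finite $p$-group, which I would either quote or re-derive from the finiteness of $\X(P)$ together with the exponent argument above. The remaining steps are formal consequences of the universal property of pro-$p$ completion.
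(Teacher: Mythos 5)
Your overall route is the same as the paper's: both arguments come down to (i) the observation that $\X_p(P)$ and $\X(P)$ satisfy matching universal properties --- equivalently, since $P$ is finite, the two presentations have literally the same generators and relators, so $\X_p(P)$ is the pro-$p$ completion of $\X(P)$ --- and (ii) the fact that $\X(P)$ is a finite $p$-group, hence equal to its own pro-$p$ completion. Step (i) is fine as you present it. The paper simply quotes (ii), which is a theorem of Sidki \cite{Sid}; had you done the same, your proof would be complete and essentially identical to the paper's.

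The gap is in your attempted derivation of (ii). The embedding of $\X(P)/W(P)$ into $P\times P\times P$ handles the top of the extension, but the non-abelian tensor square does not ``govern'' $W(P)$ itself: what $P\otimes P$ controls (via Rocco's $\nu(P)$ from \cite{Norai}, or via the exterior square) is $D(P)/R(P)$ and $W(P)/R(P)\simeq H_2(P,\mathbb{Z})$, i.e.\ everything only \emph{modulo} the subgroup $R(P)=[P,[L(P),P^{\psi}]]$. The assignment $g\otimes h\mapsto [g,h^{\psi}]$ is well defined only after killing $R(P)$, so your argument leaves $R(P)$ unaccounted for --- an abelian normal subgroup that a priori could be infinite, since at this stage $L(P)$ is not yet known to be finite (it is the kernel of $\X(P)\to P$, and the finiteness of $\X(P)$ is exactly what is at stake). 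For the same reason, your proposed fallback of re-deriving the finiteness of $P\otimes P$ ``from the finiteness of $\X(P)$'' is circular. The clean fix is to cite Sidki's theorem that $\X(H)$ is a finite $p$-group whenever $H$ is; with that citation in place of your middle step, the rest of your argument goes through and matches the paper's proof.
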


\begin{proof}
 The construction $\X( G )$ for a discrete group $G$ is characterized by the following property: for any discrete group $\Gamma$ and two homomorphisms 
 $\sigma,  \tau: G \to \Gamma$ such that $[\sigma(g), \tau(g)] = 1$ for all $g \in G$, there exists a unique homomorphism $\varphi: \X(G) \to \Gamma$ such that $\varphi |_G = \sigma$ and $\psi \circ (\varphi |_{G^{\psi}}) = \tau$. The pro-$p$ construction has the analogous property where ``discrete'' is substituted with ``pro-$p$'', and ``homomorphism'' with ``continuous homomorphism''. This together with the fact that $\X(P)$ is a finite $p$-group allows us to construct the obvious maps between $\X(P)$ and $\X_p(P)$, in both directions, whose compositions are the identity maps and whose restriction on $P \cup P^{\psi}$ is the identity map.
\end{proof}

\begin{lemma} \label{inverse0} 
	Let $G$ be a pro-$p$ group.
	Then $\X_p(G) \simeq \varprojlim \X(G/U)$, where the inverse limit is over all finite quotients $G/U$ of $G$.
\end{lemma}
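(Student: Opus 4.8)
My plan is to construct a natural continuous homomorphism
\[
\Phi \colon \X_p(G) \longrightarrow \varprojlim \X(G/U)
\]
and then prove it is an isomorphism by showing it is a continuous bijection of profinite groups. The first ingredient is functoriality of $\X_p$. Given the canonical epimorphism $q_U \colon G \twoheadrightarrow G/U$, the two continuous maps $g \mapsto q_U(g)$ and $g \mapsto q_U(g)^{\psi}$ of $G$ into $\X_p(G/U)$ commute on the diagonal, by the defining relators of $\X_p(G/U)$; so the pro-$p$ universal property invoked in the proof of Lemma~\ref{finite-p} yields a unique continuous homomorphism $\pi_U \colon \X_p(G) \to \X_p(G/U) \simeq \X(G/U)$, the last identification being Lemma~\ref{finite-p} applied to the finite $p$-group $G/U$. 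These $\pi_U$ are compatible with the transition maps $\X(G/U) \to \X(G/V)$ induced for $U \subseteq V$, and hence assemble into $\Phi$.

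Surjectivity will be the easy half. Since $q_U$ is onto, the image of $\pi_U$ contains both $G/U$ and $(G/U)^{\psi}$, which topologically generate $\X_p(G/U)$; being compact, this image is closed, hence all of $\X_p(G/U)$, so each $\pi_U$ is surjective. A closed subgroup of an inverse limit of profinite groups over a directed system that projects onto every term must be the whole limit, so $\Phi(\X_p(G))$, being compact and projecting onto each $\X(G/U)$, equals $\varprojlim \X(G/U)$.

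The substantive point, and where I expect the real work to lie, is injectivity, i.e.\ that the family $\{\pi_U\}$ separates points of $\X_p(G)$. Because $\X_p(G)$ is a pro-$p$ group we have $\bigcap_M M = 1$ as $M$ ranges over the open normal subgroups, so it suffices to produce, for each such $M$, a $U$ with $\ker \pi_U \subseteq M$. Fixing $M$, I would set $\bar\Gamma = \X_p(G)/M$, a finite $p$-group, and compose the quotient map with the inclusions of $G$ and $G^{\psi}$ to obtain two continuous homomorphisms $G \to \bar\Gamma$; being continuous maps into a finite group, they factor through a common finite quotient $G/U$, giving $\bar\alpha, \bar\beta \colon G/U \to \bar\Gamma$ with $[\bar\alpha(\bar g), \bar\beta(\bar g)] = 1$. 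The universal property of $\X(G/U) \simeq \X_p(G/U)$ then supplies $\bar\varphi \colon \X(G/U) \to \bar\Gamma$ such that $\bar\varphi \circ \pi_U$ agrees with the quotient map $\X_p(G) \to \bar\Gamma$ on the topological generators $G \cup G^{\psi}$, hence everywhere. Thus $\ker \pi_U \subseteq M$, as needed, and $\Phi$ is injective.

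Finally, a continuous bijective homomorphism of profinite groups is automatically a topological isomorphism, so $\Phi$ witnesses $\X_p(G) \simeq \varprojlim \X(G/U)$. The only delicate step is the separation argument of the third paragraph, which turns on the ability to factor continuous maps into a finite $p$-group through a finite quotient $G/U$ and then apply the universal property together with Lemma~\ref{finite-p}; everything else is formal once functoriality of $\X_p$ is established.
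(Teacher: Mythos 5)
Your proof is correct and is essentially the paper's own argument: the paper likewise constructs the canonical epimorphism onto $\varprojlim \X(G/U)$ and establishes injectivity by showing that every finite $p$-quotient $\mu\colon \X_p(G)\to V$ factors through $\X(G/U)$ with $U=U_1\cap U_2$, where $U_1=\ker(\mu)\cap G$ and $U_2^{\psi}=\ker(\mu)\cap G^{\psi}$. The only cosmetic difference is that the paper obtains the factorization by observing that $\ker(\X_p(G)\to\X_p(G/U))$ is the closed normal subgroup generated by $U\cup U^{\psi}$, whereas you invoke the universal property of $\X(G/U)$ to build the map $\X(G/U)\to V$ directly.
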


\begin{proof} Let $U$ be an open normal subgroup of $G$. Then the epimorphism $G \to G/ U$ induces an epimorphism  $\X_p(G) \to \X_p(G/U)$ and we identify $\X_p(G/U)$ with  $ \X(G/U)$. This induces an epimorphism
	$$\X_p(G) \to \varprojlim \X(G/U).
	$$
	To show that this map is an isomorphism it suffices to show that every finite $p$-group $V$ that is a quotient of $\X_p(G)$, is a quotient of suitable $\X(G/U)$. Let $\mu : \X_p(G) \to V$ be the quotient map and $U_1 = Ker (\mu) \cap G$ and $U_2 = Ker (\mu)^{\psi} \cap G$, i.e. $U_2^{\psi} = Ker(\mu) \cap G^{\psi}$. Set $U = U_1 \cap U_2$ and note that since $V$ is a finite $p$-group, the groups $G/ U_1$, $G/ U_2$ and $G/ U$  are finite $p$-groups. Since the original Sidki construction preserves finite $p$-groups we conclude that  $\X_p(G/U) \simeq \X(G/ U)$ is a finite $p$-group and by construction $V$ is a quotient of $\X_p(G/U)$ since $Ker (\X_p(G) \to \X_p(G/U))$ is the normal closed subgroup generated by $U \cup U^{\psi}$, hence $Ker (\X_p(G) \to \X_p(G/U)) \subseteq Ker (\mu)$.	
\end{proof}
For a discrete group $H$ we denote by $\widehat{H}$ the pro-$p$ completion of $H$.
 
\begin{prop} \label{p1}
 For any discrete group H, we have
 \[ \X_p(\widehat{H}) \simeq \widehat{\X(H)}.\]
\end{prop}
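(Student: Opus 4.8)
The plan is to identify both sides with the same inverse limit of Sidki constructions taken over the finite $p$-group quotients of $H$. For the left-hand side, Lemma~\ref{inverse0} applied to the pro-$p$ group $G = \widehat{H}$ gives immediately
\[ \X_p(\widehat{H}) \simeq \varprojlim_U \X(\widehat{H}/U), \]
where $U$ runs over the open normal subgroups of $\widehat{H}$. Since the finite quotients of a pro-$p$ completion $\widehat{H}$ are exactly the finite $p$-group quotients $H/N$ of $H$ (with $N \trianglelefteq H$ and $H/N$ a finite $p$-group), I would rewrite this as $\X_p(\widehat{H}) \simeq \varprojlim_N \X(H/N)$, the inverse limit being taken over that directed set of normal subgroups $N$.

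For the right-hand side, by definition $\widehat{\X(H)} = \varprojlim_V V$, where $V$ runs over the finite $p$-group quotients $\X(H)/M$ of the discrete group $\X(H)$. So it suffices to show that the subsystem $\{\X(H/N)\}_N$ is cofinal inside the system of all finite $p$-quotients of $\X(H)$; the two inverse limits will then coincide.

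This cofinality breaks into two checks. First, each surjection $H \twoheadrightarrow H/N$ induces a surjection $\X(H) \twoheadrightarrow \X(H/N)$, and since $H/N$ is a finite $p$-group, Sidki's construction $\X(H/N)$ is again a finite $p$-group; hence every $\X(H/N)$ is itself one of the finite $p$-quotients of $\X(H)$, compatibly with the transition maps, so $\{\X(H/N)\}_N$ is genuinely a subsystem. Second --- and this is the substantive point --- given an arbitrary finite $p$-quotient $\mu \colon \X(H) \twoheadrightarrow V$, I must produce some $N$ with $H/N$ finite of $p$-power order such that $\mu$ factors through $\X(H) \to \X(H/N)$. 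I would set $N_1 = \ker(\mu) \cap H$ and $N_2 = (\ker(\mu) \cap H^\psi)^{\psi^{-1}}$ and put $N = N_1 \cap N_2$; as in the proof of Lemma~\ref{inverse0}, $H/N_1$ and $H/N_2$ embed into $V$, so $H/N$ is a finite $p$-group, and the kernel of $\X(H) \to \X(H/N)$ is the normal closure of $N \cup N^\psi$, which lies in $\ker(\mu)$. Hence $\mu$ factors through $\X(H/N)$, giving cofinality.

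Combining the two identifications yields $\X_p(\widehat{H}) \simeq \varprojlim_N \X(H/N) \simeq \widehat{\X(H)}$. The main obstacle is the second cofinality check, but it is essentially a transcription of the domination argument already carried out in Lemma~\ref{inverse0} (there for a pro-$p$ group $G$, here for the discrete group $\X(H)$), so no new ideas are required; the only points needing care are the standard identification of the finite quotients of $\widehat{H}$ with the finite $p$-quotients of $H$ and the compatibility of all induced maps with the transition morphisms of the two inverse systems.
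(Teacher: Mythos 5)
Your argument is correct and is essentially the paper's own proof in different packaging: the substantive step in both is the observation that an arbitrary finite $p$-quotient $\mu\colon \X(H)\twoheadrightarrow V$ factors through $\X(H/N)$ for $N = N_1\cap N_2$ with $N_1 = \ker(\mu)\cap H$ and $N_2$ its $\psi$-twin, combined with Lemma~\ref{inverse0} and the fact that $\X$ preserves finite $p$-groups. The only difference is organizational --- the paper first uses explicit presentations to produce the epimorphism $\widehat{\X(H)}\twoheadrightarrow \X_p(\widehat{H})$ and then proves injectivity by the quotient argument, whereas you bypass the presentation step entirely by exhibiting both groups as the limit of a common cofinal system $\{\X(H/N)\}_N$ --- and both routes are sound.
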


\begin{proof}
Let $H = \langle X \mid R \rangle$ be a presentation of $H$ as a discrete group. Note we are not assuming that $X$ or $R$ is finite.
 Notice that $\X(H)$ is the discrete group generated by $X \cup X^{\psi}$, with $\{[h,h^{\psi}]| h \in H \} \cup R \cup R^{\psi}$ as a set defining relators. Clearly $\widehat{\X(H)}$ is the group with this same presentation in the category of pro-$p$ groups, that is:
 \[ \widehat{\X(H)} = \langle X, X^{\psi} |R, R^{\psi},  [h,h^{\psi}] \hbox{ for } h \in i(H) \subset \widehat{H}\rangle_p,\]
 where $i: H \to \widehat{H}$ is the canonical map. The group $\X_p(\widehat{H})$, on the other hand, has by definition a presentation with the same set of generators, but with defining relators $[h,h^{\psi}]$ for all $h \in \widehat{H}$ (rather that only $h \in i(H)$). Thus there is a continuous epimorphism $\widehat{\X(H)} \twoheadrightarrow \X_p(\widehat{H})$.

 To show that this is an isomorphism it suffices to show that
every finite $p$-group quotient $V$ of $\widehat{\X(H)}$ is a finite $p$-quotient of $\X_p(\widehat{H})$. Note that $V$ is a finite $p$-group quotient of $\X(H)$ since $\widehat{\X(H)}$ is the pro-$p$ completion of $\X(H)$. Thus  if $\mu : \X(H) \to V$ is an epimorphism we set $U_1 = Ker (\mu) \cap H, U_2 = Ker (\mu)^{\psi} \cap H$ and $U = U_1 \cap U_2$. Then $V$ is a quotient of the finite $p$-group $\X(P)$, where $P = H/ U$ is a finite $p$-group and by Lemma \ref{inverse0} $\X(P)$ is a quotient of $\X_p( \widehat{H})$.
 \end{proof}

\begin{crlr} \label{fin-pres}
 If $G$ is a finitely presented pro-$p$ group, then so is $\X_p(G)$.
\end{crlr}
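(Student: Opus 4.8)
The plan is to reduce the statement to the finite presentability of Sidki's construction in the \emph{discrete} category and then transport it across the pro-$p$ completion functor by means of Proposition \ref{p1}. The section heading ``Finite presentability and completions'' signals exactly this strategy: all of the preceding lemmas compare $\X_p$ of a completion with the completion of $\X$, so the corollary should fall out of Proposition \ref{p1} together with a known discrete result.

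First I would realise $G$ as the pro-$p$ completion of a suitable finitely presented discrete group. Since $G$ is finitely presented as a pro-$p$ group, fix a finite pro-$p$ presentation $G = \langle X \mid R \rangle_p$ with both $X$ and $R$ finite, and let $H = \langle X \mid R \rangle$ be the discrete group with the same finite presentation. By the universal property of the pro-$p$ completion, the identity on $X$ induces an isomorphism $\widehat{H} \simeq G$. Next I would apply Proposition \ref{p1} to this $H$, obtaining
\[
\X_p(G) \simeq \X_p(\widehat{H}) \simeq \widehat{\X(H)},
\]
so that it suffices to prove that $\widehat{\X(H)}$ is a finitely presented pro-$p$ group.

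To finish, I would use that $H$ is a finitely presented discrete group: by the discrete theory developed in \cite{BriKoc}, the weak commutativity construction $\X(H)$ is then finitely presented as a discrete group. The pro-$p$ completion of any finitely presented discrete group is a finitely presented pro-$p$ group, since a finite discrete presentation $\langle Y \mid S \rangle$ yields the finite pro-$p$ presentation $\langle Y \mid S \rangle_p$ of its completion. Hence $\widehat{\X(H)}$, and therefore $\X_p(G)$, is finitely presented.

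The substantive input is the discrete fact that $\X(H)$ is finitely presented whenever $H$ is; everything else is formal bookkeeping with completions, and this is where the real work lives. Were one to insist on an intrinsically pro-$p$ argument avoiding \cite{BriKoc}, the main obstacle would be $H_2$: recalling from the preliminaries that finite presentability of a pro-$p$ group is equivalent to finiteness of $H_1$ and $H_2$ with $\mathbb{F}_p$-coefficients, and noting that $H_1(\X_p(G),\mathbb{F}_p)$ is visibly finite because it is a quotient of $H_1(G,\mathbb{F}_p)^{\oplus 2}$, the entire difficulty concentrates in proving that $H_2(\X_p(G),\mathbb{F}_p)$ is finite. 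The trouble is precisely that the defining relators $[g,g^{\psi}]$ are infinite in number and do not obviously reduce to finitely many consequences of the relators coming from a finite presentation of $G$, which is exactly the gap that the detour through the discrete result sidesteps.
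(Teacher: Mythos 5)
There is a genuine gap at the very first step. You ``fix a finite pro-$p$ presentation $G=\langle X\mid R\rangle_p$ with $X$ and $R$ finite, and let $H=\langle X\mid R\rangle$ be the discrete group with the same presentation.'' But in a pro-$p$ presentation the relators are elements of the \emph{free pro-$p$ group} $\widehat{F}$ on $X$, not of the abstract free group $F$ on $X$ (this is also the convention the paper uses: $G\simeq \widehat{F}/R$ with $R$ the closed normal closure of finitely many elements $r_1,\dots,r_n\in\widehat{F}$). Such relators need not be words in $X^{\pm 1}$, so the discrete group $H=\langle X\mid R\rangle$ is not defined. What your argument implicitly assumes is that the relators can always be chosen inside the dense abstract free subgroup $F\leq\widehat{F}$, equivalently that every finitely presented pro-$p$ group is the pro-$p$ completion of a finitely presented discrete group. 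That is a nontrivial claim which you do not justify, and it is not needed for the corollary.

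The paper's proof avoids exactly this point: it applies Proposition \ref{p1} only in the free case, where $\widehat{F}$ genuinely is the pro-$p$ completion of the discrete free group $F$, concluding that $\X_p(\widehat{F})\simeq\widehat{\X(F)}$ is finitely presented because $\X(F)$ is finitely presented by Theorem A of \cite{BriKoc} (and the pro-$p$ completion of a finitely presented discrete group is a finitely presented pro-$p$ group, as you correctly note). For a general $G\simeq\widehat{F}/R$ with $R$ the closed normal closure of $\{r_1,\dots,r_n\}\subset\widehat{F}$, one then observes that $\X_p(G)$ is the quotient of $\X_p(\widehat{F})$ by the closed normal closure of the finite set $\{r_1,\dots,r_n,r_1^{\psi},\dots,r_n^{\psi}\}$, and a quotient of a finitely presented pro-$p$ group by finitely many additional relators is again finitely presented. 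Your remaining ingredients (the discrete result from \cite{BriKoc}, the behaviour of finite presentations under completion, and the closing homological remarks) are fine; it is only the reduction of $G$ itself to a completion $\widehat{H}$ that must be replaced by this two-step argument.
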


\begin{proof} 
	Let $F$ be a finitely generated free discrete group and let $\widehat{F}$ be its pro-$p$ completion i.e. it is a free pro-$p$ group with the same free basis as $F$. By Proposition \ref{p1} $\X_p(\widehat{F})$ is the pro-$p$ completion of $\X(F)$, which is finitely presented as a discrete group by Theorem A in \cite{BriKoc}. Thus $\X_p(\widehat{F})$ also admits a finite presentation (as a pro-$p$ group).
 
 In general, if $G \simeq \widehat{F}/R$, where $R \leq \widehat{F}$ is the normal closure of the closed subgroup generated by a finite set $\{r_1, \ldots, r_n\} \subset R$, then $\X_p(G)$ is the quotient of $\X_p(\widehat{F})$ by the normal closure of the closed subgroup generated by
 $\{r_1, \ldots, r_n, r_1^{\psi}, \ldots, r_n^{\psi}\}$. Thus $\X_p(G)$ is also finitely presented as a pro-$p$ group.
\end{proof}

Recall that for a finitely generated, nilpotent discrete group $H$, it was shown by
Gupta, Rocco and Sidki in  \cite{GupRocSid} that $\X(H)$ is nilpotent. The proof in \cite{GupRocSid} involves long commutator calculations. Here we prove a pro-$p$ version of this result as a corollary of the fact that it holds for discrete groups.
\begin{prop} \label{nilpotent1}
Let $G$ be a finitely generated nilpotent pro-$p$ group. Then $\X_p(G)$ is a nilpotent
pro-$p$ group.
\end{prop}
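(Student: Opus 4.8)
The plan is to bound the nilpotency class of $\X_p(G)$ \emph{uniformly} by presenting $G$ as a continuous quotient of a free nilpotent pro-$p$ group of finite rank and finite class, and then transporting the discrete result of Gupta, Rocco and Sidki through the completion isomorphism of Proposition \ref{p1}. A naive approach via Lemma \ref{inverse0}, writing $\X_p(G) \simeq \varprojlim \X(G/U)$ with each $G/U$ a finite (hence nilpotent) $p$-group, does \emph{not} immediately work: the nilpotency class of $\X(G/U)$ could a priori grow with $U$, and an inverse limit of nilpotent groups of unbounded class need not be nilpotent. The whole point is to produce a class bound depending only on the rank and class of $G$.

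First I would fix data for $G$: let $c$ be its nilpotency class and $d$ the (finite) number of topological generators. Let $F = F_{c,d}$ be the free nilpotent \emph{discrete} group of class $c$ on $d$ generators; its pro-$p$ completion $\widehat{F}$ is the free nilpotent pro-$p$ group of class $c$ on $d$ generators. Since $G$ is topologically generated by $d$ elements and has class at most $c$, the universal property of $\widehat{F}$ in the category of nilpotent-of-class-$\leq c$ pro-$p$ groups yields a continuous epimorphism $\pi \colon \widehat{F} \twoheadrightarrow G$. Next, because $F$ is a finitely generated nilpotent discrete group, the result of Gupta, Rocco and Sidki \cite{GupRocSid} gives that $\X(F)$ is nilpotent, of some class $c'$ that depends only on $c$ and $d$. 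Consequently its pro-$p$ completion $\widehat{\X(F)}$ is nilpotent of class $\leq c'$, since the closures of the terms of the lower central series of $\X(F)$ surject onto the lower central series of $\widehat{\X(F)}$. By Proposition \ref{p1} we have $\X_p(\widehat{F}) \simeq \widehat{\X(F)}$, so $\X_p(\widehat{F})$ is nilpotent of class $\leq c'$.

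Finally I would invoke functoriality of the construction, exactly as used in the proof of Lemma \ref{inverse0}: the continuous epimorphism $\pi$ induces a continuous epimorphism $\X_p(\widehat{F}) \twoheadrightarrow \X_p(G)$. A quotient of a nilpotent pro-$p$ group of class $\leq c'$ is again nilpotent of class $\leq c'$, and therefore $\X_p(G)$ is nilpotent, completing the proof.

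The steps that require genuine verification (and where I would expect the only real friction) are two standard facts about nilpotent pro-$p$ groups rather than any commutator calculation. First, one must identify $\widehat{F_{c,d}}$ as the free object in the variety of class-$\leq c$ pro-$p$ groups so that the generating map extends to $\pi$; this is where the finite rank and finite class of $G$ are used. Second, one must confirm that pro-$p$ completion does not increase nilpotency class, so that the bound $c'$ obtained in the discrete category persists in the pro-$p$ category. Both are routine, and the essential conceptual gain is that $c'$ is intrinsic to $c$ and $d$ and hence independent of $G$, which is precisely what circumvents the unbounded-class obstruction of the inverse-limit route.
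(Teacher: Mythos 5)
Your proof is correct and follows essentially the same route as the paper: find a finitely generated nilpotent discrete group whose pro-$p$ completion surjects onto $G$, apply the Gupta--Rocco--Sidki theorem to its $\X$-group, transport nilpotency through Proposition \ref{p1}, and pass to the quotient $\X_p(G)$. The only (immaterial) difference is that the paper takes the dense discrete subgroup $H=\langle X\rangle\leq G$ generated by a finite topological generating set, rather than the free nilpotent group $F_{c,d}$, which spares it the need to invoke the universal property of the free nilpotent pro-$p$ group.
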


\begin{proof} Let $X$ be a finite generating set of $G$ as a pro-$p$ group
i.e. $G = \overline{\langle X \rangle}$. Let $H$ be the discrete subgroup of $G$ 
generated by $X$ i.e. $H = \langle X \rangle$. Then the closure
$\overline{H}$ of $H$ is $G$, hence there is an epimorphism of pro-$p$
groups $\widehat{H} \to \overline{H} = G$ that induces an epimorphism of
pro-$p$ groups $\X_p(\widehat{H}) \to \X_p(G)$. Since $G$ is nilpotent,
$H$ is nilpotent and hence $\X(H)$ is nilpotent. Then $\X_p(\widehat{H}) \simeq 
\widehat{\X(H)}$ and its quotient $\X_p(G)$ are nilpotent pro-$p$ groups.
\end{proof}
\end{section}

\begin{section}{Some structural results}

 Recall that in the discrete case $L(H) \leq \X(H)$ denotes the subgroup generated by the elements $h^{-1} h^{\psi}$ for all $h \in H$. It can  be described also as the kernel of the homomorphism $\alpha: \X(H) \to H$ defined by $\alpha(h) = \alpha(h^{\psi}) = h$ for all $h \in H$.

 \begin{defi}
 {\it For a pro-$p$ group $G$, we define $L_p = L_p(G):= Ker(\alpha)$, where $\alpha: \X_p(G) \to G$ is the homomorphism of pro-$p$ groups defined by $\alpha(g) = \alpha(g^{\psi}) = g$ for all $g \in G$. }
 \end{defi}

 \begin{lemma}  \label{genL} Let $G$ be a pro-$p$ group. Then
  $L_p(G)$ is the closed subgroup generated by the elements $g^{-1} g^{\psi}$, for all $g \in G$.
 \end{lemma}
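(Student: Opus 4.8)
The plan is to reduce to the finite case via the inverse limit description of Lemma~\ref{inverse0} and then transport the known discrete identity, recalled at the start of this section, that $L(P)=\ker(\alpha_P)$ is generated by the elements $\bar g^{-1}\bar g^\psi$. Write $M$ for the closed subgroup of $\X_p(G)$ generated by $\{g^{-1}g^\psi : g\in G\}$; the goal is to show $M=L_p(G)=\ker(\alpha)$. The easy inclusion is immediate: since $\alpha(g^{-1}g^\psi)=g^{-1}g=1$ for every $g\in G$ and $\ker(\alpha)$ is closed, we get $M\subseteq L_p(G)$.

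For the reverse inclusion I would use Lemma~\ref{inverse0} to write $\X_p(G)=\varprojlim_U \X(G/U)$, with projections $\pi_U:\X_p(G)\to\X(G/U)$ whose kernels I denote $K_U=\ker(\pi_U)$. Each $G/U$ is a finite $p$-group, so Lemma~\ref{finite-p} identifies $\X(G/U)$ with $\X_p(G/U)$, and the recalled discrete description gives that $L_p(G/U)=\ker(\alpha_U)$ is the finite subgroup of $\X(G/U)$ generated by the elements $\bar g^{-1}\bar g^\psi$, where $\alpha_U:\X(G/U)\to G/U$. The key point is the commutative diagram $\alpha_U\circ\pi_U=\beta_U\circ\alpha$, where $\beta_U:G\to G/U$ is the canonical projection; checking it on the generators $g$ and $g^\psi$ is a one-line verification.

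From this I would extract two facts. First, $\pi_U(M)=L_p(G/U)$, because $\pi_U$ sends each generator $g^{-1}g^\psi$ of $M$ to the element $\bar g^{-1}\bar g^\psi$, and as $g$ ranges over $G$ these surject onto the generators of the finite group $L_p(G/U)$. Second, $\pi_U(L_p(G))\subseteq L_p(G/U)$, because if $\alpha(x)=1$ then $\alpha_U(\pi_U(x))=\beta_U(\alpha(x))=1$, so $\pi_U(x)\in\ker(\alpha_U)=L_p(G/U)$. Combining these with $M\subseteq L_p(G)$ yields the chain $L_p(G/U)=\pi_U(M)\subseteq\pi_U(L_p(G))\subseteq L_p(G/U)$, hence $\pi_U(M)=\pi_U(L_p(G))$ for every $U$.

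Finally I would conclude by a compactness argument. The subgroups $K_U$ form a fundamental system of open neighbourhoods of the identity in $\X_p(G)=\varprojlim_U\X(G/U)$, so for each $U$ and each $c\in L_p(G)$ the equality $\pi_U(M)=\pi_U(L_p(G))$ produces some $m\in M$ with $cm^{-1}\in K_U$, whence $L_p(G)\subseteq MK_U$. Taking the intersection over all $U$ gives $L_p(G)\subseteq\bigcap_U MK_U=\overline{M}=M$, since $M$ is closed, and together with the easy inclusion this yields $M=L_p(G)$. I expect the only delicate point to be the topological bookkeeping in this last step, namely verifying that the $K_U$ genuinely form a fundamental system of neighbourhoods of $1$ so that equality of all finite images together with closedness forces equality of the closed subgroups; every other step is either the trivial inclusion or a direct transcription of the finite-group statement through the limit.
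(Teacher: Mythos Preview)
Your proof is correct and follows essentially the same route as the paper: both reduce to finite quotients via Lemma~\ref{inverse0}, use the commutative square linking $\alpha$ and $\alpha_Q$, and conclude that the two closed subgroups have the same image in every finite quotient $\X(G/U)$, hence coincide. Your version is in fact slightly more careful: the paper asserts directly that $\pi(L_p(G))=\ker(\alpha_Q)$, whereas you only use the easy inclusion $\pi_U(L_p(G))\subseteq L_p(G/U)$ and then sandwich with $\pi_U(M)=L_p(G/U)$, and you also spell out the closing compactness step that the paper leaves as ``cannot be distinguished by looking at the finite quotients''.
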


 \begin{proof}
  Denote by $A(G)$ the closed subgroup of $\X_p(G)$ generated by the elements $g^{-1} g^{\psi}$ for  $g \in G$. Note that $A(Q)= L_p(Q)$ for a
  finite $p$-group $Q$. Clearly $A(G) \subseteq Ker (\alpha)$. Let $\pi_0: G \twoheadrightarrow Q$ be a continuous epimorphism onto a finite $p$-group $Q$. Then we have a commutative diagram:
    \[      \xymatrix{ \X_p(G) \ar@{->>}[r]^{\pi} \ar@{->>}[d]_{\alpha} & \X(Q) \ar@{->>}[d]^{\alpha_Q} \\
                G \ar@{->>}[r]_{\pi_0} & Q}
  \] where $\pi$ is induced by $\pi_0$.
  If follows that $\pi(L_p(G)) =  Ker(\alpha_Q) = L_p(Q) = A(Q) = \pi(A(G))$. This holds for all $Q$, thus $L_p(G)$ and $A(G)$ cannot be distinguished by looking at the finite quotients of $\X_p(G)$ since by Lemma \ref{inverse0} $\X_p(G)$ is the inverse limit of $\X(Q)$ over all possible finite $p$-groups $Q$. So $L_p(G) = A(G)$.  
 \end{proof}
 
 \begin{prop}  \label{Lfg}
  If $G$ is a finitely generated pro-$p$ group, then $L_p(G)$ is a finitely generated pro-$p$ group.
 \end{prop}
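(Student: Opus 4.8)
The plan is to use the pro-$p$ criterion recalled in Section~2: a pro-$p$ group is finitely generated if and only if its first continuous homology with $\mathbb{F}_p$-coefficients is finite. So I would set $V := H_1(L_p(G),\mathbb{F}_p) = L_p(G)/\overline{L_p(G)^p[L_p(G),L_p(G)]}$ and aim to prove $\dim_{\mathbb{F}_p} V < \infty$. Two structural facts drive the argument. First, by Lemma~\ref{genL} the elements $\ell(g) := g^{-1}g^{\psi}$ topologically generate $L_p(G)$, and a short commutator computation gives the cocycle identity $\ell(gh) = \ell(g)^{h}\,\ell(h)$; hence $\ell$ induces a derivation and, writing $I$ for the augmentation ideal of $\mathbb{F}_p[[G]]$, a surjective $\mathbb{F}_p[[G]]$-module homomorphism $\phi : I \to V$ with $\phi(g-1) = \overline{\ell(g)}$, the action of $G$ on $V$ being the conjugation action induced by $\alpha$. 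Second, since $\alpha|_G = \mathrm{id}$, the copy $G \subseteq \X_p(G)$ splits $\alpha$, so $\X_p(G) = L_p(G) \rtimes G$; in this semidirect product the defining relation $[g,g^{\psi}]=1$ rewrites as $[g,\ell(g)]=1$, that is $\ell(g)^{g} = \ell(g)$.

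The second fact translates into $\phi((g-1)^2)=0$ for every $g \in G$. Therefore $\ker \phi$ contains the closed two-sided ideal $J$ generated by $\{(g-1)^2 : g \in G\}$ --- two-sided because this generating set is invariant under conjugation, $(g-1)^2 \mapsto (g^{h}-1)^2$. Consequently $V$ is a quotient of $I/J$, which is the augmentation ideal of the quotient algebra $B := \mathbb{F}_p[[G]]/J$. The whole problem is thus reduced to showing that $B$ is finite-dimensional over $\mathbb{F}_p$.

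To see this I would filter $\mathbb{F}_p[[G]]$ by the powers of $I$ and pass to the associated graded ring. If $x_1,\dots,x_d$ topologically generate $G$, then $\mathrm{gr}\,\mathbb{F}_p[[G]]$ is generated in degree one by the classes $\xi_i$ of $x_i - 1$, hence is a quotient of the free associative algebra $T = \mathbb{F}_p\langle X_1,\dots,X_d\rangle$. The degree-two leading term of a generator $(g-1)^2$ of $J$ is the square of the linear form $L_g = \sum_i c_i \xi_i$ representing $g-1$ in $I/I^2$; as $g$ ranges over $G$ these $L_g$ exhaust $I/I^2$, so $\mathrm{gr}\,J$ contains all squares of linear forms. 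Modding $T$ by the two-sided ideal generated by $\{L^2 : L \text{ linear}\}$ yields, by polarization, the exterior algebra $\Lambda(X_1,\dots,X_d)$ when $p$ is odd and the truncated polynomial algebra $\mathbb{F}_2[X_1,\dots,X_d]/(X_i^2)$ when $p=2$; both have dimension $2^d$ and vanish in degrees $> d$. Hence $\mathrm{gr}\,B$ is a quotient of a $2^d$-dimensional algebra, so the augmentation ideal of $B$ is nilpotent, $B$ is finite-dimensional, $V$ is finite, and $L_p(G)$ is finitely generated.

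The delicate point --- and the step I would take most care over --- is the passage to the associated graded. It is essential to use the relations $(g-1)^2 = 0$ for \emph{all} $g \in G$, not merely for the generators $x_i$: only by letting $g$ range over the whole group do the leading terms fill out every square of a linear form and collapse $\mathrm{gr}\,B$ onto an exterior-type algebra. Looking only at the $x_i$ would leave the symmetric part in degree two uncancelled and would wrongly suggest that $\mathrm{gr}\,B$, and hence $V$, is infinite-dimensional. The remaining verifications --- the cocycle identity, the fact that conjugation-invariance of the generators makes $J$ two-sided, and that finite-dimensionality of $\mathrm{gr}\,B$ forces that of $B$ by completeness of the $I$-adic filtration --- are routine.
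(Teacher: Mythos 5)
Your argument is correct, but it takes a genuinely different route from the paper. The paper's proof reduces everything to the discrete case: it writes $G$ as a quotient of $\widehat{F}$ for $F$ a finitely generated free discrete group, uses $\X_p(\widehat{F})\simeq\widehat{\X(F)}$ together with the result of \cite[Prop.~2.3]{BriKoc} that $L(F)$ is a finitely generated discrete group, checks that a finite generating set of $L(F)$ still generates $L_p(\widehat{F})$ topologically by testing on the finite quotients $\X(P)$, and finally passes to the quotient $L_p(G)$ of $L_p(\widehat{F})$. You instead work intrinsically in $\X_p(G)$: you linearize via the crossed-homomorphism identity $\ell(gh)=\ell(g)^{h}\ell(h)$ and the rewritten defining relator $\ell(g)^{g}=\ell(g)$ (valid since $[g,g^{\psi}]=[g,\ell(g)]$ in the semidirect decomposition $\X_p(G)=L_p(G)\rtimes G$), reduce finite generation of $L_p(G)$ to finite-dimensionality of $\mathbb{F}_p[[G]]/J$ with $J$ the closed ideal generated by all $(g-1)^2$, and kill this algebra by the associated-graded computation, using that the classes $L_g$ of $g-1$ exhaust $I/I^2$ so that the graded quotient is dominated by an exterior-type algebra of dimension $2^d$. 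Your route is self-contained (no appeal to the discrete theorem, whose proof is where the real commutator work of the paper's approach is hidden), avoids the detour through the free case, and yields the explicit bound $d(L_p(G))\leq 2^{d(G)}-1$; the paper's route is shorter on the page and stays closer to the inverse-limit philosophy used throughout. The steps you flag as routine are indeed standard, though for completeness one should cite the profinite version of the isomorphism between continuous derivations $G\to V$ and $\mathbb{F}_p[[G]]$-module maps $I\to V$ from \cite{RibZal}, and note that the passage from $\mathrm{gr}$ to the algebra itself uses that $J$ is closed and that the powers of $I$ form a base of neighbourhoods of $0$, so that $I^{d+1}\subseteq J$.
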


 \begin{proof}
  Let $F$ be a finitely generated free discrete group and let $\widehat{F}$ be its pro-$p$ completion i.e. it is a free pro-$p$ group with the same free basis as $F$. We already know that $\X_p(\widehat{F}) \simeq \widehat{\X(F)}$. By \cite[Prop.~2.3]{BriKoc} we know that $L(F) \subseteq \X(F)$ is a finitely generated discrete group. If $Y \subseteq L(F)$ is a finite generating set, then its image $i(Y)$ in $\X_p(\widehat{F})$, where $i: \X(F) \to \widehat{\X(F)} \simeq \X_p(\widehat{F})$ is the canonical map, is a generating set for $L_p(\widehat{F})$ as a pro-$p$ group. Indeed, it suffices to verify that the image of $i(Y)$ in each quotient of $\widehat{\X(F)}$ of the type $\X(P)$, where $P$ is a finite $p$-group, is a generating set for $L(P)$. This follows from the fact that $L(P)$ is the image of $L(F)$ by the homomorphism $\X(F) \twoheadrightarrow \X(P)$ induced by the projection $F \twoheadrightarrow P$.
  
  Thus $L_p(\widehat{F})$ is a finitely generated pro-$p$ group. In general, if $G$ is a quotient of $\widehat{F}$, then $L_p(G)$ is a quotient of $L_p(\widehat{F})$, thus it is finitely generated as a pro-$p$ group too.
 \end{proof}

 \begin{defi}
{\it For a pro-$p$ group $G$, we define $$D_p = D_p(G):= Ker(\beta),$$ where $$\beta: \X_p(G) \to G \times G$$ is the homomorphism of pro-$p$ groups defined by 
$\beta(g) = (g,1)$ and $\beta(g^{\psi}) = (1,g)$ for all $g \in G$. }
 \end{defi}

 Arguing as in Lemma \ref{genL}, we obtain:
 
\begin{lemma} Let $G$ be a pro-$p$ group. Then
$D_p(G)$ is generated by $[g,h^{\psi}]$, for $g,h\in G$, as normal closed subgroup of $\X_p(G)$.
\end{lemma}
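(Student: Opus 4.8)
The plan is to mimic the proof of Lemma \ref{genL}, replacing $L_p$ by $D_p$, the generating elements $g^{-1}g^{\psi}$ by the commutators $[g,h^{\psi}]$, and the homomorphism $\alpha$ by $\beta$. Write $N(G)$ for the closed normal subgroup of $\X_p(G)$ generated by the elements $[g,h^{\psi}]$ with $g,h\in G$; the goal is to prove $N(G)=D_p(G)$. One inclusion is immediate: since the two factors of $G\times G$ commute, $\beta([g,h^{\psi}]) = [(g,1),(1,h)] = (1,1)$, so every generator of $N(G)$ lies in $Ker(\beta)=D_p(G)$, and as $D_p(G)$ is closed and normal we get $N(G)\subseteq D_p(G)$.

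Next I would settle the statement for a finite $p$-group $Q$, where $\X_p(Q)=\X(Q)$ by Lemma \ref{finite-p}. Here the quotient $\X(Q)/N(Q)$ is the group obtained from the presentation of $\X(Q)$ by additionally imposing that $Q$ commutes elementwise with $Q^{\psi}$; this forces the quotient to be the direct product $Q\times Q^{\psi}\cong Q\times Q$, and the quotient map is exactly $\beta_Q$. Hence $N(Q)=Ker(\beta_Q)=D_p(Q)$ for every finite $p$-group $Q$.

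Finally I would run the inverse-limit argument. For a continuous epimorphism $\pi_0\colon G\twoheadrightarrow Q$ onto a finite $p$-group, the induced epimorphism $\pi\colon\X_p(G)\to\X(Q)$ sends the generator $[g,h^{\psi}]$ to $[\pi_0(g),\pi_0(h)^{\psi}]$; since $\pi_0$ is onto, these exhaust the generators of $N(Q)$, so $\pi(N(G))=N(Q)$. On the other hand the square relating $\beta$ and $\beta_Q$ through $\pi_0\times\pi_0$ commutes, so $\pi(D_p(G))\subseteq D_p(Q)$. Combining this with the finite case gives
\[
N(Q) = \pi(N(G)) \subseteq \pi(D_p(G)) \subseteq D_p(Q) = N(Q),
\]
so $\pi(N(G))=\pi(D_p(G))$ for every such $Q$. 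Because the groups $\X(Q)$ form a cofinal system of finite quotients with $\X_p(G)=\varprojlim\X(Q)$ (Lemma \ref{inverse0}), two closed subgroups satisfying $N(G)\subseteq D_p(G)$ and having equal images in every $\X(Q)$ must coincide, giving $N(G)=D_p(G)$.

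I expect the main point requiring care to be the finite-group step: one must verify cleanly that killing all the $[g,h^{\psi}]$ in $\X(Q)$ collapses the construction to $Q\times Q$ and that the resulting quotient map is precisely $\beta_Q$, so that the normal closure of these commutators is genuinely the full kernel $D_p(Q)$ rather than a proper subgroup. The passage to the inverse limit is then formal, relying only on the fact that a closed subgroup of a profinite group is the intersection of the preimages of its images in a cofinal family of finite quotients.
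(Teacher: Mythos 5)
Your proof is correct and follows essentially the same route as the paper, which disposes of this lemma by saying only ``Arguing as in Lemma \ref{genL}'': one checks the easy inclusion into $Ker(\beta)$, settles the case of a finite $p$-group, and then compares images of the two closed subgroups in the cofinal system of quotients $\X(Q)$ provided by Lemma \ref{inverse0}. Your explicit verification of the finite step (that killing all $[g,h^{\psi}]$ collapses $\X(Q)$ to $Q\times Q$ with quotient map exactly $\beta_Q$) correctly fills in the detail the paper leaves implicit.
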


\begin{prop} \label{DLcommute} Let $G$ be a pro-$p$ group. Then
 $[L_p(G), D_p(G)]=1$.
\end{prop}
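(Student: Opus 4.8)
The plan is to reduce the statement to finite $p$-group quotients, where it is part of the structure theory of Sidki's original construction, and then pull it back through the inverse limit $\X_p(G)\simeq\varprojlim\X(G/U)$ of Lemma \ref{inverse0}.

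First I would fix an open normal subgroup $U$ of $G$ and consider the induced continuous epimorphism $\pi=\pi_U:\X_p(G)\twoheadrightarrow\X_p(G/U)\simeq\X(G/U)$, together with the commutative squares relating $\alpha,\beta$ to their counterparts over $Q=G/U$ (as in the proof of Lemma \ref{genL}). From $\alpha_Q\circ\pi=\pi_0\circ\alpha$ and the analogous identity $\beta_Q\circ\pi=(\pi_0\times\pi_0)\circ\beta$ I get the inclusions $\pi(L_p(G))\subseteq L(Q)$ and $\pi(D_p(G))\subseteq D(Q)$, where $L(Q)=\ker\alpha_Q$ and $D(Q)=\ker\beta_Q$ are the corresponding subgroups of $\X(Q)$. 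In fact equality holds, since $\pi$ carries the generators $g^{-1}g^{\psi}$ of $L_p(G)$ and the normal generators $[g,h^{\psi}]$ of $D_p(G)$ onto the corresponding generators over $Q$, but the inclusions are all that the argument actually needs.

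Next I would invoke the discrete identity $[L(Q),D(Q)]=1$, valid in $\X(Q)$ for every finite $p$-group $Q$; this is the finite-group instance of the commuting of $L$ and $D$ in Sidki's construction \cite{Sid,GupRocSid}. Granting it, take arbitrary $\ell\in L_p(G)$ and $d\in D_p(G)$. For every open normal $U$ we then have $\pi_U(\ell)\in L(G/U)$ and $\pi_U(d)\in D(G/U)$, so $\pi_U([\ell,d])=[\pi_U(\ell),\pi_U(d)]=1$. Since $\X_p(G)\simeq\varprojlim\X(G/U)$, the family $\{\pi_U\}$ separates points, i.e. $\bigcap_U\ker\pi_U=1$; hence $[\ell,d]=1$. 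As $\ell$ and $d$ were arbitrary, this gives $[L_p(G),D_p(G)]=1$.

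The formal transfer through the inverse limit is routine, so the real content, and the step I expect to be the main obstacle, is the discrete identity $[L(Q),D(Q)]=1$ itself. If one prefers a self-contained argument over a citation, it can be obtained directly from the defining relators: expanding $[gh,(gh)^{\psi}]=1$ by the standard commutator identities and cancelling the vanishing terms $[g,g^{\psi}]$ and $[h,h^{\psi}]$ produces the basic relations among the elements $[a,b^{\psi}]$, from which a short commutator computation shows that each generator $g^{-1}g^{\psi}$ of $L$ centralizes each normal generator $[a,b^{\psi}]$ of $D$. This calculation is where the weak-commutativity relation is genuinely used; everything else is approximation by finite quotients.
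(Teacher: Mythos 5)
Your proof is correct, but it transfers the discrete fact to the pro-$p$ setting by a different mechanism than the paper. The paper argues at the level of presentations: by \cite[Lemma~4.1.6]{Sid} the relation $[g_1^{-1}g_1^{\psi},[g_2,g_3^{\psi}]]=1$ is a \emph{formal consequence} of the four relators $[h,h^{\psi}]$ with $h\in\{g_1g_2,g_2g_3,g_1g_3,g_1g_2g_3\}$, and since exactly these relators are imposed in the pro-$p$ presentation of $\X_p(G)$, the identity holds there verbatim; one then passes from topological generators to the closed subgroups $L_p$ and $D_p$ (implicitly using that the centralizer of the normal closed subgroup $L_p$ is itself closed and normal, so that it swallows the normal closure of the elements $[g,h^{\psi}]$). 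You instead push everything into the finite quotients $\X(G/U)$, quote $[L(Q),D(Q)]=1$ for finite $p$-groups as a black box, and use that the projections of the inverse limit of Lemma \ref{inverse0} separate points. Both routes rest on the same commutator computation of Sidki; yours has the advantage of being element-wise (so no closure/centralizer argument is needed at the end, and it matches the approximation technique the paper itself uses for Lemma \ref{genL} and Proposition \ref{Rtrivial}), while the paper's is more direct, needing neither Lemma \ref{inverse0} nor the discrete statement for all finite $p$-groups, only the finitary consequence-of-relators computation. Your inclusions $\pi(L_p(G))\subseteq L(Q)$ and $\pi(D_p(G))\subseteq D(Q)$ from the commuting squares are exactly what is needed, and your remark that equality is not required is right.
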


\begin{proof}
As in \cite[Lemma~4.1.6]{Sid}, for any $g_1,g_2,g_3 \in G$, the relation 
\[ [g_1^{-1} g_1^{\psi}, [g_2, g_3^{\psi}]]=1\]
can be obtained as a consequence of the defining relations $[h,h^{\psi}]=1$ for $h \in \{g_1g_2, g_2g_3, g_1g_3, g_1g_2g_3\}$. Thus the (topological) generators of $L_p$ commute with the (topological) generators of $D_p$ as a normal
subgroup of $\X_p(G)$, that is,  $[L_p,D_p]=1$.
\end{proof}

\begin{defi}
 {\it Define the homomorphism of pro-$p$ groups
 \[\rho_p: \X_p(G) \to G \times G \times G\]
 by $\rho_p(g) = (g,g,1)$ and $\rho_p(g^{\psi}) = (1,g,g)$.}
\end{defi}

Consider the diagonal map $d: \X_p(G) \to (\X_p(G) / L_p(G)) \times (\X_p(G) / D_p(G))$ that sends $g$ to $(g L_p(G), g D_p(G) )$ and the isomorphism $\nu : (\X_p(G) / L_p(G)) \times (\X_p(G) / D_p(G)) \to G \times G \times G $ induced by the isomorphisms $(\X_p(G) / L_p(G)) \times (\X_p(G) / D_p(G)) \simeq G \times G \times G^{\psi}  \simeq G \times G \times G$. Then $\sigma \circ  \nu  \circ d = \rho_p$, where $\sigma : G \times G \times G \to G \times G \times G$ is the isomorphism that sends $(g_1, g_2, g_3)$ to $(g_2, g_1, g_3)$. Then $Ker (\rho_p) = Ker (d) = D_p(G) \cap L_p(G)$.
We set 
\[W_p = W_p(G):=Ker(\rho_p) = D_p(G) \cap L_p(G).\]
This is a normal subgroup of $\X_p(G)$ which is
central in $D_p(G) L_p(G)$ by Proposition \ref{DLcommute}. In particular, $W_p(G)$ is abelian. Furthermore, it is not hard to see that the image of $\rho_p$ can be written as
\begin{equation} \label{Imrho}
 Im(\rho_p) = \{ (g_1,g_2,g_3) \in G \times G \times G \hbox{ } | \hbox{ } g_1 g_2^{-1} g_3 \in \overline{[G,G]} \}.
\end{equation}

\begin{prop}  \label{Solv}
 If $G$ is soluble, then so is $\X_p(G)$.
\end{prop}
\begin{proof}
 This is clear, since $W_p$ is abelian and $\X_p(G)/W_p$ is a subgroup of $G \times G \times G$, therefore is soluble.
\end{proof}

 It is clear by \eqref{Imrho} that $Im(\rho_p)$ is a subdirect product of $G \times G \times G$, i.e. maps surjectively on each copy of $G$ in the direct product.

 \begin{crlr}
  If $G$ is a finitely presented pro-$p$ group, then $\X_p(G)/W_p(G) \simeq Im(\rho_p)$ is a finitely presented pro-$p$ group. 
 \end{crlr}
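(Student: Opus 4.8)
The plan is to identify $Im(\rho_p)$ with a subdirect product to which the machinery of Section~\ref{pro-p-fibre} applies, and then to invoke the equivalence recorded in the preliminaries that for pro-$p$ groups type $FP_2$ coincides with finite presentability. Since $W_p(G) = Ker(\rho_p)$, the corollary already asserts $\X_p(G)/W_p(G) \simeq Im(\rho_p)$, so it suffices to prove that $Im(\rho_p)$ is of type $FP_2$.

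By the description \eqref{Imrho}, the group $Im(\rho_p) = \{(g_1,g_2,g_3) \in G \times G \times G \mid g_1 g_2^{-1} g_3 \in \overline{[G,G]}\}$ is a closed subgroup of $G \times G \times G$. I would apply Corollary A with $n = 3$, $k = 2$ and $G_1 = G_2 = G_3 = G$. Each factor is $FP_2$ because $G$ is finitely presented, which for pro-$p$ groups is exactly $FP_2$. It then remains to check the finite-index hypothesis on the three two-fold projections $p_{i,j}$, $1 \le i < j \le 3$. In fact each of these is surjective: working in the abelian group $G/\overline{[G,G]}$, the defining condition $g_1 g_2^{-1} g_3 \in \overline{[G,G]}$ can always be solved for any one of the three coordinates once the other two are fixed (for instance, given $g_1, g_2$ one takes $g_3$ in the coset $g_2 g_1^{-1}\,\overline{[G,G]}$). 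Hence $p_{i,j}(Im(\rho_p)) = G \times G$ for every pair, and Corollary A yields that $Im(\rho_p)$ is $FP_2$, i.e.\ finitely presented.

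Alternatively, and essentially equivalently, I would realize $Im(\rho_p)$ directly as a fibre pro-$p$ product: set $p_1 \colon G \times G \to G/\overline{[G,G]}$, $(g_1,g_2) \mapsto \overline{g_1}\,\overline{g_2}^{-1}$, and $p_2 \colon G \to G/\overline{[G,G]}$, $g_3 \mapsto \overline{g_3}^{-1}$; the associated fibre product is then precisely $Im(\rho_p)$, and one applies the $(n-1)-n-(n+1)$ Theorem for pro-$p$ groups with $n = 2$. Here both $G \times G$ and $G$ are of type $FP_2$ (a finite direct product of $FP_2$ pro-$p$ groups is again $FP_2$), and $Q = G/\overline{[G,G]}$ is a finitely generated abelian pro-$p$ group, hence of type $FP_\infty$ and in particular $FP_3$.

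The one point deserving care — and the analogue of the obstacle that forced the extra hypothesis in Corollary~\ref{homology} — is verifying that $Ker(p_1)$ is of type $FP_1$, i.e.\ finitely generated, \emph{without} assuming that $\overline{[G,G]}$ is finitely generated. The key observation is that $Ker(p_1) = \{(g_1,g_2) \mid \overline{g_1} = \overline{g_2}\}$ sits in a short exact sequence $\overline{[G,G]} \to Ker(p_1) \to G$ via $(g_1,g_2) \mapsto g_1$ (with kernel $1 \times \overline{[G,G]}$), and is topologically generated by the finite set consisting of the diagonal lifts $(x,x)$ of a finite generating set $X$ of $G$ together with the elements $(1,[x_i,x_j])$, since conjugation by the diagonal elements recovers all of $1 \times \overline{[G,G]}$ as a closed normal subgroup. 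This is exactly where the three-factor (rather than two-factor) nature of $Im(\rho_p)$ pays off, and is why no finite generation hypothesis on $\overline{[G,G]}$ is needed in the present statement. Because Corollary A already packages this verification, leading with the Corollary A argument is the most economical route.
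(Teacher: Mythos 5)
Your lead argument is exactly the paper's proof: identify $Im(\rho_p)$ via \eqref{Imrho}, observe that all three two-fold projections are surjective, apply Corollary A with $k=2$, and use that $FP_2$ equals finite presentability for pro-$p$ groups. The alternative fibre-product route you sketch is also sound, but it is not needed; the proposal is correct and matches the paper's approach.
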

 
 \begin{proof} Recall that a pro-$p$ group is finitely presented if and only if it is $FP_2$.
  It is clear by \eqref{Imrho} that 
  \[p_{1,2}(Im(\rho_p)) =  p_{1,3}(Im(\rho_p)) = p_{2,3}(Im(\rho_p)) =  G \times G.\]
  Thus Corollary A applies for $k = 2$.
 \end{proof}

 \begin{prop}  \label{Wfg}
  If $G$ is a finitely presented pro-$p$ group  such that $\overline{[G,G]}$ is a finitely generated pro-$p$ group, then $W_p(G)$ is a finitely generated pro-$p$ group.
 \end{prop}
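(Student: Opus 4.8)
The plan is to realize $W_p(G)$ as the kernel of a \emph{central} extension whose quotient is visibly one of the finitely presented subdirect products already handled, and then to use centrality to convert finite normal generation into honest finite generation.

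First I would restrict the homomorphism $\beta\colon \X_p(G)\to G\times G$ to $L_p(G)$. Since $W_p=D_p\cap L_p=\ker(\beta)\cap L_p=\ker(\beta|_{L_p})$, this yields a short exact sequence $1\to W_p\to L_p\xrightarrow{\beta}\beta(L_p)\to 1$. By Proposition~\ref{DLcommute} the subgroup $W_p$ is central in $D_pL_p$, and since $L_p\subseteq D_pL_p$ it is a fortiori central in $L_p$; so this is a central extension. The two ends are already under control on one side: $L_p=L_p(G)$ is finitely generated by Proposition~\ref{Lfg} (as $G$ is finitely generated), so it remains only to understand the quotient $\beta(L_p)$.

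Next I would identify $\beta(L_p)$ explicitly. By Lemma~\ref{genL}, $L_p$ is topologically generated by the elements $g^{-1}g^{\psi}$, and $\beta(g^{-1}g^{\psi})=(g^{-1},g)$; since $\beta$ is a continuous homomorphism of compact groups, $\beta(L_p)=\overline{\langle (g^{-1},g)\mid g\in G\rangle}$. As $g$ ranges over $G$ so does $g^{-1}$, so this generating set equals $\{(h,h^{-1})\mid h\in G\}$, and hence $\beta(L_p)$ is exactly the subgroup $S=\overline{\langle (h,h^{-1})\mid h\in G\rangle}\le G\times G$ of Corollary~\ref{homology}. The hypotheses of that corollary are precisely the ones in force here, namely that $G$ is finitely presented and $\overline{[G,G]}$ is finitely generated, so $\beta(L_p)=S$ is a finitely presented pro-$p$ group.

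Finally I would combine the pieces. The central extension $1\to W_p\to L_p\to\beta(L_p)\to 1$ has $L_p$ finitely generated and quotient $\beta(L_p)$ finitely presented; by the standard pro-$p$ argument (lift a finite presentation of the quotient along a finite generating set of $L_p$) this forces $W_p$ to be finitely generated \emph{as a closed normal subgroup} of $L_p$. I expect the last step to be the only genuinely load-bearing point: for an arbitrary normal subgroup, finite normal generation does not imply finite generation, but here $W_p$ is central in $L_p$, so the normal closure of a finite subset of $W_p$ coincides with the closed subgroup it generates. Hence $W_p$ is finitely generated as a pro-$p$ group. The remaining details are bookkeeping (verifying $\ker(\beta|_{L_p})=W_p$ and the identification $\beta(L_p)=S$); the essential hypothesis that $\overline{[G,G]}$ is finitely generated enters solely through Corollary~\ref{homology}, and ultimately through the $(n-1)-n-(n+1)$ Theorem, while centrality is what legitimizes the final upgrade from normal generation to generation.
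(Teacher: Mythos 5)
Your proof is correct and follows essentially the same route as the paper: both consider the central extension $1 \to W_p \to L_p \to S \to 1$ (the paper via $\rho_p|_{L_p}$, you via $\beta|_{L_p}$, which give the same extension), feed in Proposition \ref{Lfg} for finite generation of $L_p$ and Corollary \ref{homology} for finite presentability of the quotient $S$, and use centrality of $W_p$ in $L_p$ to conclude. The only difference is cosmetic: the paper reads off $W_p \simeq H_0(S, H_1(W_p,\mathbb{Z}_p))$ as squeezed between $H_2(S,\mathbb{Z}_p)$ and $H_1(L_p,\mathbb{Z}_p)$ in the five-term exact sequence, whereas you lift a finite presentation of $S$ along a finite generating set of $L_p$ to get finite normal generation of $W_p$ and then upgrade via centrality --- the same argument in presentation-theoretic rather than homological language.
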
     

 \begin{proof}
  The proof is similar to the discrete case established  in \cite{KocSid}. We consider the beginning of the $5$-term exact sequence associated to the LHS spectral sequence arising from the central extension of pro-$p$ groups $W_p \rightarrowtail L_p \twoheadrightarrow \rho_p(L_p)$, i.e. there is an exact sequence
  $$H_2(\rho_p(L_p), \mb{Z}_p) \to H_0(\rho_p(L_p), H_1(W_p, \mb{Z}_p)) \to H_1(L_p, \mb{Z}_p) \to H_1(\rho_p(L_p), \mb{Z}_p) \to 0$$
  and since $W_p$ is central $L_p$ we have $H_0(\rho_p(L_p), H_1(W_p, \mb{Z}_p)) \simeq H_1(W_p, \mb{Z}_p) \simeq W_p$.
  It follows that $W_p$ is finitely generated as a pro-$p$ group if 
  $H_2(\rho_p(L_p),\mb{Z}_p)$ and $H_1(L_p, \mb{Z}_p) \simeq L_p/\overline{[L_p,L_p]}$ are finitely generated. 
  Note that by Proposition \ref{Lfg} $L_p$ is a finitely generated pro-$p$ group.
  
  It remains to prove that  $H_2(\rho_p(L_p),\mb{Z}_p)$ is a finitely generated pro-$p$ group. Note that
  $$
  \rho_p(L_p) \simeq \overline{\langle \{ (g^{-1}, 1, g) \mid g \in G \} \rangle} \simeq \{ (g_1, g_2) \in G \times G \mid g_1 g_2 \in \overline{[G,G]}\}.$$
  This is exactly the group $S$ considered in Corollary \ref{homology}. So $\rho_p(L_p)$ is finitely presented and thus 
   $H_2(\rho_p(L_p),\mb{Z}_p)$ is finitely generated as we wanted.
%
 \end{proof}
\end{section}

\begin{section}{The Schur multiplier}

\subsection{The non-abelian exterior square}

In  \cite{Mor} Moravec defined the \textit{non-abelian tensor square} $G \widehat{\otimes} G$ of a pro-$p$ group $G$ as the pro-$p$ group generated (topologically) by the symbols $g \ten h$, for $g,h \in G$, subject to the defining relations
\begin{equation} \label{tensor1} (g_1 g) \ten h= (g_1^g \ten h^g) (g \ten h) \end{equation} 
and
\begin{equation}  \label{tensor2} g \ten (h_1 h) = (g \ten h)(g^h \ten h_1^h) \end{equation}
for all $g, g_1,h, h_1 \in G$. Furthermore the \textit{non-abelian exterior square} $G \widehat{\wedge} G$ of $G$ is
\[ G \widehat{\wedge} G = G \widehat{\otimes} G / \Delta(G),\]
where $\Delta(G)$ is the normal (closed) subgroup generated by $g \ten g$, for all $g \in G$. We denote by $g \ext h$ the image of $g \ten h$ in $G \widehat{\wedge} G$.

Let $$\mu_G: G  \widehat{\wedge}  G \to \overline{[G,G]}$$ be the homomorphism of pro-$p$ groups  defined by $$\mu_G( g \ext h) = [g,h]\hbox{ for all }g,h \in G.$$ 

\begin{prop}[\cite{Mil}; \cite{Mor}, Proposition 2.2]
 For any pro-$p$ group $G$ we have
 \[ H_2(G, \mb{Z}_p) \simeq Ker(\mu_G).\]
\end{prop}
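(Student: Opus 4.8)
The plan is to reduce the statement to the pro-$p$ Hopf formula by fixing a free pro-$p$ presentation of $G$. Choose a short exact sequence of pro-$p$ groups $R \rightarrowtail F \twoheadrightarrow G$ with $F$ free pro-$p$. Since $F$ is free, $H_2(F,\mathbb{Z}_p) = 0$, so the five-term exact sequence in continuous homology attached to this extension (coming from the LHS spectral sequence with trivial coefficients $\mathbb{Z}_p$) degenerates to
\[ 0 \to H_2(G,\mathbb{Z}_p) \to R/\overline{[R,F]} \xrightarrow{\ \theta\ } F/\overline{[F,F]} \to G/\overline{[G,G]} \to 0, \]
where $\theta$ is induced by the inclusion $R \hookrightarrow F$. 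Reading off the kernel of $\theta$ gives the pro-$p$ Hopf formula
\[ H_2(G,\mathbb{Z}_p) \cong \frac{R \cap \overline{[F,F]}}{\overline{[R,F]}}. \]
Everything then comes down to identifying $Ker(\mu_G)$ with this same subquotient of $F$.

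The key step is to construct a natural continuous isomorphism $\Phi : G \widehat{\wedge} G \xrightarrow{\ \sim\ } \overline{[F,F]}/\overline{[R,F]}$. I would define $\Phi$ on topological generators by $g \widehat{\wedge} h \mapsto [\tilde{g}, \tilde{h}]\,\overline{[R,F]}$, where $\tilde{g}, \tilde{h} \in F$ are arbitrary lifts of $g,h \in G$. Independence of the lifts is a routine commutator computation modulo $\overline{[R,F]}$: replacing $\tilde{g}$ by $\tilde{g}r$ with $r \in R$ alters $[\tilde{g},\tilde{h}]$ only by elements of $\overline{[R,F]}$, using $[\overline{[F,F]},R] \subseteq \overline{[R,F]}$. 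The defining relations \eqref{tensor1} and \eqref{tensor2} of the tensor square, together with $g \widehat{\wedge} g = 1$, are each the image of a standard commutator identity in $F$, so $\Phi$ is a well-defined continuous homomorphism. For the inverse I would send $[\tilde{g},\tilde{h}]\,\overline{[R,F]} \mapsto g \widehat{\wedge} h$; this is well-defined because $\overline{[R,F]}$ is killed (for $r \in R$ one has $\bar{r} \widehat{\wedge} \bar{f} = 1 \widehat{\wedge} \bar{f} = 1$ in $G \widehat{\wedge} G$) and because $\overline{[F,F]}$ is topologically generated by commutators, so the assignment extends continuously and inverts $\Phi$. Under $\Phi$ the map $\mu_G$ corresponds to the surjection $\overline{[F,F]}/\overline{[R,F]} \twoheadrightarrow \overline{[G,G]}$ induced by $F \twoheadrightarrow G$, whose kernel is exactly $(R \cap \overline{[F,F]})/\overline{[R,F]}$. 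Combining this with the Hopf formula yields $Ker(\mu_G) \cong H_2(G,\mathbb{Z}_p)$.

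The main obstacle is verifying that $\Phi$ is a genuine isomorphism of pro-$p$ groups rather than merely an abstract isomorphism on the underlying generator-and-relation data: one must check that both $\Phi$ and its candidate inverse are continuous and that they are compatible with the closures taken in $\overline{[F,F]}$, in $\overline{[R,F]}$, and in the defining relations of $G \widehat{\wedge} G$. An alternative route that sidesteps the explicit presentation is to reduce to finite $p$-groups: write $G = \varprojlim_U G/U$ over open normal subgroups $U$, apply Miller's classical theorem to each finite $p$-group $G/U$ (where the pro-$p$ and discrete exterior squares coincide and $H_2(-,\mathbb{Z}_p)$ is the $p$-primary Schur multiplier), and then pass to the inverse limit. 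The cost there is proving $G \widehat{\wedge} G \cong \varprojlim (G/U \widehat{\wedge} G/U)$ and that continuous homology commutes with these inverse limits, together with the naturality of the maps $\mu_{G/U}$. I would prefer the Hopf-formula argument, since it confines all the delicate topological bookkeeping to the single construction of $\Phi$.
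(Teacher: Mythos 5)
The paper offers no proof of this proposition at all: it is quoted from Miller \cite{Mil} and from Moravec \cite[Proposition~2.2]{Mor}, so there is no argument of the authors to compare yours against. Your overall plan --- the pro-$p$ Hopf formula together with an identification $G \widehat{\wedge} G \cong \overline{[F,F]}/\overline{[R,F]}$ --- is the standard route, and the forward half is sound: the five-term sequence degenerates as you say because $H_2(F,\mathbb{Z}_p)=0$ for free pro-$p$ $F$, and your map $\Phi$ is a well-defined continuous surjection (lift-independence modulo $\overline{[R,F]}$, compatibility with \eqref{tensor1}, \eqref{tensor2} and $g\widehat{\wedge}g=1$, and continuity via a continuous section $G\to F$ all check out).

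The genuine gap is the inverse. You cannot define a homomorphism $\overline{[F,F]}/\overline{[R,F]} \to G\widehat{\wedge}G$ by prescribing values on the commutators $[\tilde{g},\tilde{h}]$ and asserting that ``the assignment extends continuously because these topologically generate'': a map on a generating set extends to a homomorphism only if it respects \emph{every} relation among the generators, and the relations that hold among commutators inside a free (pro-$p$) group are not visibly consequences of \eqref{tensor1}, \eqref{tensor2} and $g\widehat{\wedge}g=1$. Equivalently, what is missing is the injectivity of $\Phi$, i.e.\ the statement that $\mu_F : F\widehat{\wedge}F \to \overline{[F,F]}$ is injective for free pro-$p$ $F$ --- that every identity among commutators in a free group follows from the universal ones. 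That is precisely the non-trivial core of Miller's theorem, and your outline supplies no argument for it; flagging it as ``the main obstacle'' and then dismissing it does not close it. Your alternative route --- reduce to finite $p$-groups, quote Miller's classical theorem there, and pass to inverse limits using $G\widehat{\otimes}G \cong \varprojlim\,(G/U \otimes G/U)$ together with $H_2(G,\mathbb{Z}_p)\cong\varprojlim H_2(G/U,\mathbb{Z}_p)$ --- is actually the cleaner one in the pro-$p$ setting and is essentially how Moravec's Proposition~2.2 proceeds, since it outsources the hard combinatorial step to the finite case instead of re-proving it.
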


\subsection{The subgroup \texorpdfstring{$R_p(G)$}{RpG}}
Consider the pro-$p$ group 
\[R_p=R_p(G) = \overline{[G,[L_p(G), G^{\psi}]]} \subseteq \X_p(G).\]
As in the discrete case, it is immediate that $R_p(G) \subseteq W_p(G)$, thus it is an abelian subgroup of $\X_p(G)$. Also, we can show that $R_p$ is actually normal in $\X_p(G)$, as a consequence of the fact that $D_p$ and $L_p$ commute.

\begin{lemma} \label{calculation} In $\X_p(G)$ for $x,y, g,h,k \in G$ we have
	$[x, y^{\psi}] = [x^{\psi},y]$ and $[ [g,h^{\psi}], k]  \in [ [g,h], k^{\psi}] R_p$.
\end{lemma}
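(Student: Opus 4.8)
The plan is to establish the two identities in turn, using only the defining relations $[g,g^{\psi}]=1$, the generation of $L_p$ from Lemma \ref{genL}, and the commuting relation $[L_p,D_p]=1$ from Proposition \ref{DLcommute}. Throughout I would work with the standard commutator identities $[ab,c]=[a,c]^b[b,c]$ and $[a,bc]=[a,c][a,b]^c$, and freely use that $[g,h^{\psi}]$ and $[g^{\psi},h]$ lie in $D_p$ (both have trivial image under $\beta$) while $L_p$ and $D_p$ are normal.

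For the identity $[x,y^{\psi}]=[x^{\psi},y]$, I would first expand the defining relation
\[ 1=[xy,(xy)^{\psi}]=[xy,x^{\psi}y^{\psi}]. \]
Applying the two commutator identities and discarding the factors $[x,x^{\psi}]$ and $[y,y^{\psi}]$, which are trivial by definition, this collapses to $[x,y^{\psi}]^{y}=[x^{\psi},y]^{y^{\psi}}$. The key observation is that conjugation by $y$ and by $y^{\psi}$ induce the same map on $D_p$: writing $y^{\psi}=y\lambda$ with $\lambda=y^{-1}y^{\psi}\in L_p$, for any $d\in D_p$ one has $d^{y^{\psi}}=(d^{y})^{\lambda}=d^{y}$, since $d^{y}\in D_p$ and $[L_p,D_p]=1$. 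Since $[x^{\psi},y]\in D_p$, applying this to the displayed relation and cancelling the common conjugation by $y$ gives $[x,y^{\psi}]=[x^{\psi},y]$.

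For the second identity I would first use the identity just proved (with $[g,h]\in G$ in the first slot) to rewrite $[[g,h],k^{\psi}]=[[g,h]^{\psi},k]=[[g^{\psi},h^{\psi}],k]$, so the task reduces to comparing $[[g^{\psi},h^{\psi}],k]$ with $[[g,h^{\psi}],k]$ modulo $R_p$. The crucial step is to rewrite the inner commutator: setting $\lambda=g^{-1}g^{\psi}\in L_p$ and expanding $[g^{\psi},h^{\psi}]=[g,h^{\psi}]^{\lambda}[\lambda,h^{\psi}]$, the fact that $[g,h^{\psi}]\in D_p$ commutes with $\lambda\in L_p$ simplifies this to $[g^{\psi},h^{\psi}]=[g,h^{\psi}]\,\eta$ with $\eta:=[\lambda,h^{\psi}]\in[L_p,G^{\psi}]$. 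Expanding the outer commutator with $k$ and once more invoking $[L_p,D_p]=1$ to discard the conjugation by $\eta\in L_p$ on $[[g,h^{\psi}],k]\in D_p$ yields $[[g^{\psi},h^{\psi}],k]=[[g,h^{\psi}],k]\,[\eta,k]$. Since $[\eta,k]\in[G,[L_p,G^{\psi}]]=R_p$ by the very definition of $R_p$, this gives $[[g,h^{\psi}],k]\in[[g,h],k^{\psi}]R_p$.

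The commutator-identity expansions are routine; the genuinely delicate points are the bookkeeping of membership (checking at each step that the relevant element lies in $D_p$ or $L_p$ so that Proposition \ref{DLcommute} applies) and arranging the rewriting of $[g^{\psi},h^{\psi}]$ so that the discrepancy lands precisely in the generating set $[L_p,G^{\psi}]$ of $R_p$. I expect the main obstacle to be this last alignment, namely ensuring that after clearing all the $\psi$'s the surviving error term is manifestly a generator of $R_p$ rather than merely some element of $L_p$.
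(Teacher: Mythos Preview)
Your argument is correct and matches the paper's approach essentially line for line: for the second identity both you and the paper rewrite $[[g,h],k^{\psi}]=[[g^{\psi},h^{\psi}],k]$, expand $g^{\psi}=g\,(g^{-1}g^{\psi})$, and use $[L_p,D_p]=1$ to drop the conjugations, leaving the error term $[[g^{-1}g^{\psi},h^{\psi}],k]\in R_p$. The only cosmetic difference is in the first identity, where the paper simply cites Sidki's direct computation from the defining relations, while you take the (legitimate, given the paper's ordering) shortcut of invoking the already established $[L_p,D_p]=1$ to identify conjugation by $y$ and by $y^{\psi}$ on $D_p$.
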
 

\begin{proof} The formula 	$[x, y^{\psi}] = [x^{\psi},y]$ is proved for $\X(G)$ in \cite[Lemma~4.1.6]{Sid} using the commutator formulas  \begin{equation} \label{commutator001} [ab,c] = [a,c]^b. [b,c] \hbox{ and }[a,bc] = [a,c]. [a,b]^c \end{equation} The same proof applies here.
	 	
	Using $[x,y^{\psi}] = [x^{\psi},y]$ we obtain
		 $$[ [g,h], k^{\psi}] = [ [g^{\psi},h^{\psi}], k] = [[ g (g^{-1} g^{\psi}), h^{\psi}],k] = [ [g,h^{\psi}]^{g^{-1} g^{\psi}} . [g^{-1} g^{\psi}, h^{\psi}],k] = : \alpha.$$ Using $[[g,h^{\psi}], g^{-1} g^{\psi}] \in [D_p, L_p] = 1$ and $[[[g,h^{\psi}],k], [g^{-1} g^{\psi}, h^{\psi}]] \in [D_p, L_p] = 1$ we get $$\alpha =
		 [ [g,h^{\psi}]^{g^{-1} g^{\psi}} . [g^{-1} g^{\psi}, h^{\psi}],k] =
		    [ [g,h^{\psi}]  [g^{-1} g^{\psi}, h^{\psi}],k] = $$ $$ [[g,h^{\psi}],k]^{[g^{-1} g^{\psi}, h^{\psi}]}. [ [g^{-1} g^{\psi}, h^{\psi}],k] = [[g,h^{\psi}],k] [ [g^{-1} g^{\psi}, h^{\psi}],k] \in [[g,h^{\psi}],k] R_p. $$
\end{proof}

\begin{prop}
There is an epimorphism of pro-$p$ groups $H_2(G,\mb{Z}_p) \twoheadrightarrow W_p/R_p$.
\end{prop}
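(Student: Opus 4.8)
The plan is to construct the epimorphism $H_2(G,\mb{Z}_p) \twoheadrightarrow W_p/R_p$ by exploiting the identification $H_2(G,\mb{Z}_p) \simeq Ker(\mu_G) \subseteq G \ext G$ from the Moravec proposition, together with the fact that $W_p(G) = D_p(G) \cap L_p(G)$ is abelian and central in $D_p L_p$. First I would exhibit a natural homomorphism of pro-$p$ groups from the non-abelian tensor (or exterior) square into $\X_p(G)$. The obvious candidate sends the generator $g \ten h$ to the commutator $[g, h^{\psi}] \in D_p(G)$; one checks that the defining relations \eqref{tensor1} and \eqref{tensor2} of the tensor square are respected inside $\X_p(G)$, using the commutator formulas \eqref{commutator001} and the already-established vanishing $[D_p, L_p] = 1$ from Proposition \ref{DLcommute}. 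This gives a continuous homomorphism $\theta: G \ten G \to D_p(G)$, $g \ten h \mapsto [g,h^{\psi}]$, whose image is $D_p(G)$ by the generating set of $D_p$.

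Next I would descend $\theta$ to the exterior square. Since $\Delta(G)$ is generated by the diagonal symbols $g \ten g$, and $\theta(g \ten g) = [g, g^{\psi}] = 1$ is exactly the defining relator of $\X_p(G)$, the map $\theta$ factors through $G \ext G$, yielding $\overline{\theta}: G \ext G \to D_p(G)$ with $\overline{\theta}(g \ext h) = [g, h^{\psi}]$. The key step is then to match up the two relevant subgroups under this map. Composing with the projection $\rho_p$ restricted to $D_p$, or more directly comparing $\overline{\theta}$ with $\mu_G$, one sees that $\overline{\theta}$ carries $Ker(\mu_G)$ into $W_p(G) = D_p \cap L_p$: an element of $G \ext G$ mapping to a product of exterior generators whose associated commutators $[g,h]$ multiply to the identity in $\overline{[G,G]}$ should land in $L_p$ as well as $D_p$, since modulo $L_p$ the copies $G$ and $G^{\psi}$ are identified and $[g,h^{\psi}]$ becomes $[g,h]$.

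To finish I would verify surjectivity onto the quotient $W_p/R_p$ and identify the kernel as containing the preimage of $R_p$. Here Lemma \ref{calculation} is the workhorse: the congruence $[[g,h^{\psi}],k] \in [[g,h],k^{\psi}] R_p$ shows precisely that modulo $R_p$ the tensor/exterior structure becomes symmetric enough that the induced map $G \ext G \to W_p/R_p$ factors through $H_2(G,\mb{Z}_p) = Ker(\mu_G)$, because the relations collapsing $G \ext G$ down to $Ker(\mu_G)$ become relations in $W_p/R_p$ once we quotient by $R_p = \overline{[G,[L_p,G^{\psi}]]}$. Surjectivity follows because $W_p \subseteq D_p = Im(\theta)$, so every class in $W_p/R_p$ is hit by some exterior element, and one argues that such an element can be chosen in $Ker(\mu_G)$.

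The main obstacle I anticipate is the careful bookkeeping in the middle step: verifying that $\overline{\theta}$ sends $Ker(\mu_G)$ into $W_p$ rather than merely into $D_p$, and simultaneously that the generators of $R_p = \overline{[G,[L_p,G^{\psi}]]}$ correspond under $\overline{\theta}$ to exactly the relations (beyond those already imposed in $G \ext G$) that one must further quotient by to reach $W_p$. Concretely, the delicate point is controlling the difference between $[g,h^{\psi}]$ and $[g,h]$ inside $\X_p(G)$, which is governed by $L_p$-conjugation and hence by $R_p$ via Lemma \ref{calculation}; reconciling the asymmetry of the tensor generators $g \ten h$ with the symmetric relation $[x,y^{\psi}] = [x^{\psi},y]$ is where the argument requires the most care. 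Passing to finite $p$-group quotients $G/U$ and invoking the discrete result of Rocco \cite{Norai2} (that $H_2$ is a subquotient of $\X$, isomorphic to $W/R$) together with the inverse-limit description $\X_p(G) \simeq \varprojlim \X(G/U)$ from Lemma \ref{inverse0} provides a clean alternative route to both surjectivity and the factorization, and I would use it as a cross-check.
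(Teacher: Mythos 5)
Your overall strategy is the one the paper uses: send $g \ten h$ to $[g,h^{\psi}]$, restrict to $Ker(\mu_G)\simeq H_2(G,\mb{Z}_p)$, and use Lemma \ref{calculation} to get surjectivity onto $W_p/R_p$. But there is a genuine gap at the very first step. The assignment $g \ten h \mapsto [g,h^{\psi}]$ does \emph{not} define a homomorphism $G \ten G \to \X_p(G)$: checking relation \eqref{tensor1} requires $[g_1,h^{\psi}]^{g} = [g_1^{g},(h^{g})^{\psi}]$ in $\X_p(G)$, and the two sides are $[g_1^{g},(h^{\psi})^{g}]$ and $[g_1^{g},(h^{\psi})^{g^{\psi}}]$, whose second arguments differ by conjugation by $\ell = g^{-1}g^{\psi}\in L_p$. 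Expanding with \eqref{commutator001} and using $[D_p,L_p]=1$ leaves a correction term which is a commutator of $[g_1^{g},\ell]\in[G,L_p]$ against an element of $\X_p(G)$; by the three subgroups lemma (and $[L_p,D_p]=1$) this correction lies in $R_p=\overline{[G,[L_p,G^{\psi}]]}$, but there is no reason for it to vanish in $\X_p(G)$ itself. Indeed, if the tensor-square relations held on the nose in $\X_p(G)$, one would obtain a homomorphism $\nu_p(G)/\Delta_p(G)\to\X_p(G)$ that is the identity on $G\cup G^{\psi}$, and composing with the canonical epimorphism $\X_p(G)\to\X_p(G)/R_p(G)\simeq\nu_p(G)/\Delta_p(G)$ would force $R_p(G)=1$ for every $G$ --- making the subgroup $R_p$ pointless in Theorem \ref{Schur}. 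This is exactly why the paper takes $\X_p(G)/R_p$ as the target of $\phi$ from the outset: killing $R_p$ is needed for the map to exist at all, not merely (as your write-up suggests) to make some induced map ``factor through $Ker(\mu_G)$''.

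A second, smaller confusion: $Ker(\mu_G)$ is a closed subgroup of $G\ext G$, not a quotient of it, so nothing ``factors through'' it. What happens is that $\phi$ \emph{restricted} to $Ker(\mu_G)$ lands in $W_p/R_p$, because $\overline{\alpha}\circ\phi = inc\circ\mu_G$ with $Ker(\overline{\alpha})=L_p/R_p$, while $Im(\phi)\subseteq D_p/R_p$; your middle paragraph has the right idea here. Your use of Lemma \ref{calculation} for surjectivity --- showing that the symbols $[g,h^{\psi}]$ already generate $D_p/R_p$ topologically even though $D_p$ is only generated by them as a \emph{normal} closed subgroup --- is the correct mechanism, and membership of a preimage of $w\in W_p$ in $Ker(\mu_G)$ is then automatic from the commutative square rather than something one must arrange. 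The inverse-limit route you mention as a cross-check could be made into a complete alternative proof, but it requires verifying that $W_p(G)/R_p(G)\simeq\varprojlim W(G/U)/R(G/U)$ and that the finite-level isomorphisms are compatible with the inverse system, none of which you address. As written, the proposal is not a correct proof until the target of the initial homomorphism is changed to $\X_p(G)/R_p$.
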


\begin{proof}
 Consider the homomorphism of pro-$p$ groups
 \[ \phi: G \widehat{\wedge} G \to \X_p(G)/ R_p\]
 defined by $\phi(g \ext h) = [g,h^{\psi}]$ for $g, h \in G$. The fact that $\phi$ is well-defined follows from (\ref{tensor1}), (\ref{tensor2}) and (\ref{commutator001}). Recall that $\alpha: \X_p(G) \to G$ is the homomorphism defined by $\alpha(x)=\alpha(x^{\psi}) = x$ and $L_p = Ker (\alpha)$. Now, consider the commutative diagram:
 \[
 \xymatrix{ G \widehat{\wedge} G \ar[r]^{\mu_G} \ar[d]_{\phi} & \overline{[G,G]} \ar[d]^{inc} \\
        \X_p(G)/ R_p \ar[r]_{\overline{\alpha}} & G}
 \]
 where $inc: \overline{[G,G]} \to G$ is the inclusion map and the homomorphism $\overline{\alpha}$ is induced by $\alpha$. It follows that $\phi(Ker(\mu_G))\subseteq Ker(\overline{\alpha}) = Ker(\alpha)/ R_p = L_p / R_p$. Since $\phi$ clearly takes values in $D_p/ R_p$, we have $\phi(Ker(\mu_G)) \subseteq (L_p \cap D_p) / R_p =  W_p / R_p$.
 Thus $\phi$ induces
 \begin{equation}  \label{hom1}
 \bar{\phi}: H_2(G,\mb{Z}_p) \simeq Ker(\mu_G) \to W_p/R_p.
 \end{equation}
 To see that $\bar{\phi}$ is surjective, we only need to observe that $Im(\phi)$ generates $D_p$ modulo $R_p$. Indeed,
 $D_p$ is generated \textit{as a normal closed subgroup} by the elements $[g,h^{\psi}]$, with $g, h \in G$, but modulo $R_p$ we have 
 \[ [g,h^{\psi}]^k \equiv [g,h^{\psi}] [ [g,h], k^{\psi}] \hbox{ for all }g,h,k \in G.\] 
 This follows from Lemma \ref{calculation} and $ [g,h^{\psi}]^k =  [g,h^{\psi}]  [[g,h^{\psi}],k]$.
 
 We conclude that the image of $\{[g,h^{\psi}]; g,h \in G\}$ generates (topologically) the quotient $D_p/R_p$. Thus for any $w \in W_p$, there is some $\xi \in G \widehat{\wedge} G$ such that $\phi(\xi)  = w R_p \subseteq W_p / R_p \subseteq L_p/ R_p = Ker (\overline{\alpha})$ and, by the commutativity of the diagram, $\xi \in Ker(\mu_G)$.
\end{proof}

\begin{theorem} \label{Schur}
	For any pro-$p$ group $G$
	 we have $H_2(G,\mb{Z}_p) \simeq W_p(G)/R_p(G)$.
\end{theorem}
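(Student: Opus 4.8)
The plan is to upgrade the epimorphism $\bar\phi \colon H_2(G,\mb{Z}_p) \twoheadrightarrow W_p(G)/R_p(G)$ produced in the preceding proposition to an isomorphism, by reducing to finite quotients where the corresponding statement is Rocco's theorem \cite{Norai2} for discrete groups. Two ingredients are needed: first, that both sides are \emph{continuous}, i.e. are compatible inverse limits over the finite quotients $G/U$ of $G$; and second, that at each finite level the comparison map is an isomorphism. Taken together these identify $\bar\phi$ with an inverse limit of isomorphisms, hence an isomorphism; this is of course consistent with (and refines) the surjectivity already established.

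For the continuity on the $\X_p$-side, I would start from Lemma \ref{inverse0}, which exhibits $\X_p(G) \simeq \varprojlim_U \X(G/U)$ as an inverse limit of finite $p$-groups with surjective transition maps. The subgroups $D_p, L_p, W_p, R_p$ are defined functorially (as kernels of the natural maps $\alpha$, $\beta$, $\rho_p$, and as closed commutator subgroups), so they are compatible with the projections $\X_p(G) \twoheadrightarrow \X_p(G/U) = \X(G/U)$. Since the system is surjective with finite terms it is Mittag--Leffler, so $\varprojlim^1$ vanishes and one obtains $W_p(G)/R_p(G) \simeq \varprojlim_U W_p(G/U)/R_p(G/U)$; by Lemma \ref{finite-p}, for each finite $p$-group $P = G/U$ we have $\X_p(P) = \X(P)$ and hence $W_p(P)/R_p(P) = W(P)/R(P)$. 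On the homology side, continuity of profinite homology \cite{RibZal} gives $H_2(G,\mb{Z}_p) \simeq \varprojlim_U H_2(G/U, \mb{Z}_p)$, and the groups $H_2(G/U,\mb{Z}_p)$ being finite, the system is again Mittag--Leffler.

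For the finite level, I would invoke Rocco's theorem \cite{Norai2}: for $P = G/U$ it yields $H_2(P,\mb{Z}) \simeq W(P)/R(P)$, and since $P$ is a $p$-group we have $H_2(P,\mb{Z}) = H_2(P,\mb{Z}_p)$. The construction of $\phi$, and hence of $\bar\phi$, through the natural maps $G \ext G \to \X_p(G)/R_p$ and $\mu_G$ is natural in $G$, so it commutes with the projections $G \twoheadrightarrow G/U$. Consequently $\bar\phi$ is identified with $\varprojlim_U \bar\phi_{G/U}$, where each $\bar\phi_{G/U}$ is the Rocco isomorphism at the finite level; the inverse limit of isomorphisms is an isomorphism, which is the assertion of the theorem.

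The step I expect to be the main obstacle is the bookkeeping in the second paragraph: verifying that forming the subquotient $W_p/R_p$ genuinely commutes with the inverse limit. The delicate point is that the closed commutator subgroup $R_p(G) = \overline{[G,[L_p(G),G^{\psi}]]}$ surjects onto $R(G/U)$ and is recovered exactly as $\varprojlim_U R(G/U)$, which is precisely where surjectivity of the transition maps (hence the vanishing of $\varprojlim^1$) is used, together with the analogous statement for $W_p$. Once this continuity is in place the algebraic content is carried entirely by the discrete theorem, and the isomorphism follows formally.
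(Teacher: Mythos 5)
Your strategy is sound but genuinely different from the paper's. The paper does not pass to finite quotients at all: it constructs an explicit inverse map $\bar{\lambda}\colon W_p(G)/R_p(G)\to H_2(G,\mb{Z}_p)$ by building a stem extension $1\to H_2(G,\mb{Z}_p)\to H\to G\to 1$ from a free pro-$p$ presentation via the Hopf formula (the existence of the required complement uses that torsion-free abelian pro-$p$ groups are free abelian pro-$p$), and then factoring $\rho_p^H\colon \X_p(H)\to H^3$ modulo $B=\{(x,xy,y)\mid x,y\in M\}$ through $\X_p(G)$; one checks $R_p(G)\subseteq\ker\lambda$ and $\lambda(W_p(G))\simeq M$, and that $\bar\lambda$ and $\bar\phi$ are mutually inverse. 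This is self-contained (it reproves rather than imports the algebraic content) and yields an explicit description of the isomorphism via the Hopf formula. Your route outsources all the algebra to the discrete theorem for finite $p$-groups and carries it over by continuity; it is shorter, and at the finite level you do not even need to match your $\bar\phi_P$ with Rocco's isomorphism, since a surjection between finite groups of equal order is automatically bijective. What your approach buys is economy; what it costs is dependence on two external inputs: the continuity of profinite homology under inverse limits ($H_2(G,\mb{Z}_p)\simeq\varprojlim H_2(G/U,\mb{Z}_p)$, which does hold for profinite coefficients but must be cited precisely from \cite{RibZal}), and the discrete isomorphism $W(P)/R(P)\simeq H_2(P,\mb{Z})$ for \emph{all} finite $p$-groups --- note that \cite{Norai2} treats only $p$ odd, so for $p=2$ you must cite the general discrete result (e.g.\ from \cite{KocSid}) instead.

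Two small corrections to your bookkeeping. First, the inverse systems $\{W(G/U)\}$ and $\{H_2(G/U,\mb{Z}_p)\}$ are \emph{not} surjective in general (a surjection $Q'\twoheadrightarrow Q$ of finite $p$-groups need not induce a surjection on Schur multipliers, hence not on $W/R$ nor on $W$, even though it does on $R$); the Mittag--Leffler condition you need holds anyway because all terms are finite, so this does not break the argument, but the justification should be finiteness rather than surjectivity. Second, the identification $W_p(G)=\varprojlim W(G/U)$ should be argued via left exactness of $\varprojlim$ applied to $\ker\rho_p=\varprojlim\ker\rho^{G/U}$ (the projections $W_p(G)\to W(G/U)$ need not be onto), while $R_p(G)=\varprojlim R(G/U)$ follows from the fact that a closed subgroup of a profinite group is the inverse limit of its images, the images here being exactly $R(G/U)$ because topological generators map onto generators.
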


\begin{proof}

In order to construct an inverse map $W_p/R_p \twoheadrightarrow H_2(G,\mb{Z}_p)$ we can proceed as in \cite[Subsection~6.2]{Men}, where the case of Lie algebras is considered. An outline of the procedure is as follows. We start by building a stem extension of pro-$p$ groups
\begin{equation}  \label{stem}
 1 \to H_2(G, \mb{Z}_p) =: M \hookrightarrow H \to  G \to 1,
\end{equation}
i.e. a central extension of pro-$p$ groups with $M \subseteq \overline{[H,H]}$.
This can be done by considering a free presentation $G = F/N$ (in the pro-$p$ sense), i.e. $F$ is a free pro-$p$ group, and taking $H = F/A$, where $A$ is 
a pro-$p$ subgroup of $N$ that contains $\overline{[F,N]}$ such that $A/\overline{[F,N]}$ is a complement of $H_2(G,\mb{Z}_p) \simeq  {(\overline{[F,F]} \cap N)}/{\overline{[F,N]}}$ inside $N/\overline{[F,N]}$. This complement exists because $N/(N \cap \overline{[F,F]}) \subseteq F/ \overline{[F,F]}$ is free pro-$p$ abelian and $N/\overline{[F,N]}$ is a pro-$p$ abelian group. Note that we have used that by  \cite[Thm.~4.3.4]{RibZal} every torsion-free pro-$p$ abelian group is free as a pro-$p$ abelian group, in particular this applies for $N/(N \cap \overline{[F,F]})$ that is torsion-free since $ F/ \overline{[F,F]}$ is torsion-free.

Now consider the map $$\rho_p^H: \X_p(H) \to H \times H \times H,$$ i.e. this is the map $\rho_p$ for $G$ substituted with $H$. By composing it with the projection onto the quotient $T= Im(\rho_p^H)/B$, where $B$ is defined by
 \[B = \{(x,xy,y) \hbox{ } | \hbox{ } x,y \in M\},\]
we obtain a map $\theta$ that factors through $\X_p(G)$:
\[ \xymatrix{ \X_p(H) \ar@{->>}[d]_{\pi} \ar@{->>}[r]^{\theta}  & T \\
\X_p(G) \ar@{->>}[ur]_{\lambda} & }
\]
where $\pi$ is induced by the epimorphism $\gamma : H \twoheadrightarrow G$ with kernel $M$. It is not hard then to verify $R_p(G) \subseteq Ker(\lambda)$ and 
\[ \lambda( W_p(G) ) = \{ (1,m,1) B \hbox{ } | \hbox{ } m \in M\} \simeq M.\]
Indeed 

1) $\rho_p^H(R_p(H)) \subseteq \rho_p^H(W_p(H)) = 1$, hence $\theta(R_p(H)) = 1$. And since $\pi(R_p(H)) = R_p(G)$ we conclude that $\lambda (R_p(G)) = \theta(R_p(H)) = 1$.

2) Let $\delta : T \to Im (\rho_p) \subseteq G \times G \times G$ be the map induced by $\gamma$. Then $\delta \lambda = \rho_p = \rho_p^G$ has kernel $W_p(G)$, hence $\lambda(W_p(G)) = Ker (\delta) = Ker (\nu)/ B$, where $\nu : Im (\rho_p^H) \to Im (\rho_p^G)$ is the epimorphism induced by $\gamma$. Note that $Ker(\nu) = (M \times M \times M) \cap Im (\rho_p^H) = (M \times M \times M) \cap \{ (h_1, h_2, h_3) \mid h_1 h_2^{-1} h_3 \in \overline{[H,H]} \} =  M \times M \times M$, hence $\lambda( W_p(G) ) = (M \times M \times M) / B \simeq M$ as required.

Thus $\lambda$ induces a map $$\bar{\lambda}: W_p(G)/R_p(G) \to M \simeq H_2(G,\mb{Z}_p).$$ If we realize both the homomorphism 
\eqref{hom1} and the stem extension \eqref{stem} by means of the Hopf formula for a fixed free presentation $G = F/N$, it is not hard to see that we have actually constructed maps that are inverse to each other. 
\end{proof}

Note that Theorem \ref{Schur} is consistent with the discrete case: if $G$ is a finite $p$-group, then the isomorphism $\X_p(G) \to \X(G)$ identifies $W_p(G)$ with $W(G)$ and $R_p(G)$ with $R(G)$, but also $H_2(G,\mb{Z}_p) \simeq H_2(G, \mb{Z})$ (the first homology is continuous, the second discrete).

\begin{section}{Pro-p analytic groups}
	
	For a pro-$p$ group $G$, we denote by $d(G)$ the cardinality of a minimal (topological) generating set. Furthermore, 
	$rk(G)$ denotes its \textit{rank}, that is
	\[ rk(G) = sup \{ d(H)| H \leqslant_c G\}.\]
	By \cite{LubMan}, a pro-$p$ group $G$ is $p$-adic analytic if and only if $rk(G) <\infty$. From now on we write analytic for $p$-adic analytic.
	\begin{prop} \label{anal}
		Let $G$ be a pro-$p$ group. Then $\X_p(G)$ is analytic (resp. poly-procyclic) if and only if $G$ is analytic (resp. poly-procyclic).
	\end{prop}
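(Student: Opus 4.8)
The plan is to handle both classes at once, writing $\mathcal{P}$ for either the class of analytic pro-$p$ groups or the class of poly-procyclic pro-$p$ groups, and to exploit three stability properties shared by both: closure under closed subgroups, closure under finite direct products, and closure under extensions of pro-$p$ groups. For analytic groups the extension-closure is the one nontrivial input, and I would cite it from \cite{bookDSMS} (finite rank is inherited by extensions); for poly-procyclic groups it is merely the concatenation of subnormal series with procyclic quotients. I would also record two consequences of $G \in \mathcal{P}$ that feed into Proposition \ref{Wfg}: first, $G$ is finitely presented (analytic pro-$p$ groups are of type $FP_\infty$, and poly-procyclic groups are analytic of finite rank, hence also finitely presented); second, $\overline{[G,G]}$, being a closed subgroup of $G$, again lies in $\mathcal{P}$ and is in particular finitely generated.

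The ``only if'' direction is immediate. Since the homomorphism $\alpha \colon \X_p(G) \to G$ restricts to the identity on the copy of $G$ inside $\X_p(G)$, that copy is a retract and hence a closed subgroup; closure of $\mathcal{P}$ under closed subgroups gives $G \in \mathcal{P}$ whenever $\X_p(G) \in \mathcal{P}$.

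For the ``if'' direction I would use the extension
\[ W_p(G) \rightarrowtail \X_p(G) \twoheadrightarrow \X_p(G)/W_p(G) \simeq Im(\rho_p) \subseteq G \times G \times G. \]
By \eqref{Imrho} the quotient is a closed subgroup of $G \times G \times G$, so closure under finite products and closed subgroups places $\X_p(G)/W_p(G)$ in $\mathcal{P}$. It then remains to show $W_p(G) \in \mathcal{P}$. Here Proposition \ref{Wfg} applies, since $G$ is finitely presented and $\overline{[G,G]}$ is finitely generated; it yields that $W_p(G)$ is a finitely generated pro-$p$ group, and $W_p(G)$ is abelian. A finitely generated abelian pro-$p$ group has finite rank and carries a finite subnormal series with procyclic quotients, so it belongs to $\mathcal{P}$ in either case. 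Applying extension-closure to the displayed sequence then gives $\X_p(G) \in \mathcal{P}$.

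The single genuine obstacle is the kernel $W_p(G)$: one must know it is small. This is exactly what Proposition \ref{Wfg} provides once its two hypotheses are verified for members of $\mathcal{P}$, and the verification is routine. Beyond this, the argument is only a matter of assembling the stability properties of $\mathcal{P}$ around the above extension, the sole external citation being the extension-closure of the analytic class from \cite{bookDSMS}.
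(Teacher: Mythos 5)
Your proof is correct and follows essentially the same route as the paper: the extension $W_p(G) \rightarrowtail \X_p(G) \twoheadrightarrow Im(\rho_p) \subseteq G \times G \times G$, with Proposition \ref{Wfg} supplying that the abelian kernel $W_p(G)$ is finitely generated, hence harmless. The only divergence is minor: for the poly-procyclic case the paper invokes the characterization of poly-procyclic pro-$p$ groups as the soluble pro-$p$ groups of finite rank (\cite[Proposition~8.2.2]{Wil}) and combines the analytic case with Proposition \ref{Solv}, whereas you run the same closure-property argument for both classes in parallel; both are valid.
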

	
	\begin{proof}
		Recall that the property of being analytic behaves well with respect to extensions, that is, if $N \rightarrowtail G \twoheadrightarrow Q$ is an
		exact sequence of pro-$p$ groups, then $G$ is analytic if and only if both $N$ and $Q$ are (\cite[Corollary~2.4]{LubMan}). Thus one implication of the proposition is immediate since $G$ is a quotient of $\X_p(G)$.
		
		Suppose that $G$ is analytic. Then  $G$ is of type $FP_{\infty}$ (see \cite{Kin} or \cite{S-W} for instance) and $\overline{[G,G]}$ is finitely generated as a pro-$p$ group. Thus Proposition \ref{Wfg} applies and $W_p(G)$ is a finitely generated abelian pro-$p$ group. In particular, $W_p(G)$ is also analytic. But $\X_p(G)/W_p(G) \simeq Im(\rho_p)$ must be analytic as well, being a closed subgroup 
		of $G \times G \times G$. Thus $\X_p(G)$ is analytic.
		
		The result for poly-(procyclic) pro-$p$ groups follows from the observation that these groups are exactly the soluble pro-$p$ groups of finite rank (\cite[Proposition~8.2.2]{Wil}). Thus we only need to combine the first part of the proof with Proposition \ref{Solv}. 
	\end{proof}

	\begin{crlr} \label{cook-}
		Suppose that $G$ is soluble pro-$p$ group of type $FP_{\infty}$. 
		Suppose further that $G$ is torsion-free or metabelian. Then $\X_p(G)$ is a soluble pro-$p$ group of type $FP_{\infty}$.
	\end{crlr}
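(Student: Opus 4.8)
Solubility of $\X_p(G)$ is immediate from Proposition \ref{Solv}, so the entire issue is the finiteness property $FP_\infty$. The plan is to assemble $\X_p(G)$ from pieces that are already known, or easily shown, to be of type $FP_\infty$, using that this class is closed under extensions of pro-$p$ groups. Concretely, I would use the chain of short exact sequences
\[ L_p(G) \rightarrowtail \X_p(G) \xrightarrow{\alpha} G \qquad\text{and}\qquad W_p(G) \rightarrowtail L_p(G) \twoheadrightarrow \rho_p(L_p(G)), \]
the second being central because $W_p$ lies in the centre of $D_p L_p$ by Proposition \ref{DLcommute}. Since $G$ is of type $FP_\infty$ by hypothesis, it suffices to prove that $W_p(G)$ and $\rho_p(L_p(G))$ are of type $FP_\infty$.

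For $W_p$: since $G$ is $FP_\infty$ it is in particular finitely presented, and I would feed this together with finite generation of $\overline{[G,G]}$ into Proposition \ref{Wfg} to conclude that $W_p(G)$ is a finitely generated abelian pro-$p$ group; such a group is analytic, hence of type $FP_\infty$. For $\rho_p(L_p)$: as computed in the proof of Proposition \ref{Wfg} it is isomorphic to the group $S=\{(g_1,g_2)\mid g_1 g_2\in\overline{[G,G]}\}$ of Corollary \ref{homology}, the fibre product of $p_1\colon G\to G/\overline{[G,G]}$ with its antipode. Applying the $(n-1)$-$n$-$(n+1)$ Theorem for pro-$p$ groups for every $n$ — with $\mathrm{Ker}(p_1)=\overline{[G,G]}$, the two middle groups both equal to $G$, and $Q=G/\overline{[G,G]}$ a finitely generated abelian, hence $FP_\infty$, pro-$p$ group — yields that $S$ is of type $FP_\infty$, provided $\overline{[G,G]}$ is of type $FP_\infty$.

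Thus everything reduces to a single structural input: that $\overline{[G,G]}$ is of type $FP_\infty$ (in particular finitely generated). This is exactly where the hypotheses enter and where I expect the real difficulty to lie. In the metabelian case $\overline{[G,G]}$ is abelian, so being of type $FP_\infty$ is the same as being finitely generated, i.e. of finite rank; this is forced by the assumption that $G$ is $FP_\infty$ through the theory of the $\Sigma$-invariant for metabelian pro-$p$ groups, where, unlike in the discrete setting, tameness of a module over the Iwasawa algebra pushes one to finite rank. In the torsion-free case one cannot argue so directly, and I would instead invoke the structural result that a torsion-free soluble pro-$p$ group of type $FP_\infty$ is $p$-adic analytic, so that $\overline{[G,G]}$, as a closed subgroup of an analytic group, is again analytic and therefore $FP_\infty$. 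Granting this input, the extension argument above finishes the proof; indeed in both cases $G$ turns out to be analytic, so one could alternatively conclude at once from Proposition \ref{anal} that $\X_p(G)$ is analytic, which is even stronger than the asserted $FP_\infty$.
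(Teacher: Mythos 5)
Your proposal is correct, but it takes a considerably longer route than the paper before converging on the same endgame. The paper's proof is short in both cases: for torsion-free $G$, Corob Cook's theorem \cite{Cor} shows that a torsion-free soluble pro-$p$ group of type $FP_\infty$ is analytic, and for metabelian $G$, King's result \cite[Theorem~6.2]{Kin} (an abelian-by-(finite rank) pro-$p$ group of type $FP_\infty$ has finite rank) gives the same conclusion; Proposition \ref{anal} then finishes at once. Your main argument instead assembles $\X_p(G)$ from the extensions $L_p(G) \rightarrowtail \X_p(G) \twoheadrightarrow G$ and $W_p(G) \rightarrowtail L_p(G) \twoheadrightarrow \rho_p(L_p(G))$, using Proposition \ref{Wfg} for $W_p$ and the $(n-1)-n-(n+1)$ Theorem for all $n$ for $\rho_p(L_p)\simeq S$. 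That assembly is sound (closure of $FP_\infty$ under extensions of pro-$p$ groups follows from the same spectral-sequence argument as in Section \ref{pro-p-fibre}), but it bottlenecks on exactly the same structural inputs as the paper: in the torsion-free case you invoke Corob Cook anyway, and in the metabelian case finite generation of the abelian group $\overline{[G,G]}$ is \emph{equivalent} to $G$ having finite rank, i.e.\ to King's theorem. So in both cases the input you must verify already makes $G$ analytic, and the extension machinery buys nothing extra --- as you observe yourself in your final sentence, which is precisely the paper's proof. The one point to tighten is the metabelian case: the appeal to ``$\Sigma$-invariant theory for metabelian pro-$p$ groups'' should be replaced by, or made precise as, the citation of \cite[Theorem~6.2]{Kin}, since that is the clean statement actually available and the one the paper uses.
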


\begin{proof} Note that $\X_p(G)$ is soluble since $G$ is soluble. 
	
	Assume first  that $G$ is torsion-free. By the main result of Corob Cook in \cite{Cor} $G$ is analytic, hence by Proposition \ref{anal} $\X_p(G)$ is analytic and so is of type $FP_{\infty}$. 
	
	Suppose that $G$ is metabelian. Then it fits into an exact sequence of pro-$p$ groups
	\[ 1 \to A \to G \to Q \to 1,\]
	where $A$ is abelian and $Q$ has finite rank (and in our case is abelian). By a result of King (\cite[Theorem~6.2]{Kin}), we know that if $G$ is of type $FP_{\infty}$, then $G$ is actually itself of finite rank, so is analytic. Then we can apply again Proposition \ref{anal} to deduce that $\X_p(G)$ is analytic and so is of type $FP_{\infty}$. 
	\end{proof}

It is plausible that Corob Cook result from \cite{Cor} holds for pro-$p$ groups that are not torsion-free but this is still an open problem. If that is the case then the condition in Corollary \ref{cook-} that $G$ is torsion-free is redundant.
\end{section}

\section{The group $\nu_p(G)$} \label{sectionRocco} 

In  \cite{Norai} Rocco defined for an arbitrary discrete group $H$ the discrete group $\nu(H)$. In \cite{Ellis} Ellis and  Leonard studied a similar construction.

For   a pro-$p$ group $G$ we define the pro-$p$ group $\nu_p(G)$ by the pro-$p$ presentation
$$\nu_p(G) = \langle G, G^{\psi} \mid [g_1, g_2^{\psi}]^{g_3} = [ g_1^{g_3}, (g_2^{g_3})^{\psi}] = [g_1, g_2^{\psi}]^{g_3^{\psi}}\rangle_p, $$
where $G^{\psi}$ is an isomorphic copy of $G$. Set $\overline{[G, G^{\psi}]}$ as the pro-$p$ subgroup of $\nu_p(G)$ generated by $\{ [g_1, g_2^{\psi}] | g_1, g_2 \in G  \}$.
The proof of Lemma \ref{inverse0} can be adapted to prove the following lemma.

\begin{lemma} \label{inverse} Let $G$ be a pro-$p$ group. Then $\nu_p(G)$ is the inverse limit of $\nu(G/ U)$, where $G$ is the inverse limit of finite $p$-groups $G/ U$. In particular, the pro-$p$ subgroup $\overline{[G, G^{\psi}]}$ of $\nu_p(G)$ is the inverse limit of the finite $p$-subgroups  $[G/U, (G/U)^{\psi}]$ of $\nu(G/U)$,  where $G$ is the inverse limit of finite $p$-groups $G/ U$.\end{lemma}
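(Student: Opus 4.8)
The plan is to reproduce the proof of Lemma~\ref{inverse0} line by line, with Sidki's construction $\X$ replaced everywhere by Rocco's construction $\nu$. First I would fix an open normal subgroup $U$ of $G$. The quotient map $G \twoheadrightarrow G/U$ carries each defining relator of $\nu_p(G)$ to a defining relator of $\nu_p(G/U)$, so it induces a continuous epimorphism $\nu_p(G) \twoheadrightarrow \nu_p(G/U)$. As in Lemma~\ref{inverse0}, I would then identify $\nu_p(G/U)$ with $\nu(G/U)$: since $G/U$ is a finite $p$-group, so is $\nu(G/U)$ (the $\nu$-construction, like $\X$, preserves finite $p$-groups, cf. \cite{Norai}), and $\nu_p(G/U)$ is by definition the pro-$p$ completion of $\nu(G/U)$, whence $\nu_p(G/U) \simeq \nu(G/U)$. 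These epimorphisms are compatible with the transition maps, so together they yield a continuous epimorphism $\nu_p(G) \twoheadrightarrow \varprojlim_U \nu(G/U)$.

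To see that this map is an isomorphism it suffices, exactly as in Lemma~\ref{inverse0}, to show that every finite $p$-group quotient $\mu : \nu_p(G) \twoheadrightarrow V$ factors through some $\nu(G/U)$. I would set $U_1 = \ker(\mu) \cap G$ and $U_2^{\psi} = \ker(\mu) \cap G^{\psi}$, put $U = U_1 \cap U_2$, and note that $U$ is open because $V$ is a finite $p$-group. The kernel of the induced epimorphism $\nu_p(G) \twoheadrightarrow \nu_p(G/U)$ is the closed normal subgroup generated by $U \cup U^{\psi}$, which is contained in $\ker(\mu)$ by construction; hence $\mu$ factors through the finite $p$-group $\nu_p(G/U) \simeq \nu(G/U)$, as required, and the epimorphism above is an isomorphism.

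For the ``in particular'' clause I would argue as in Lemma~\ref{genL}. Each projection $\pi_U : \nu_p(G) \twoheadrightarrow \nu(G/U)$ sends the topological generators $[g_1,g_2^{\psi}]$ of $\overline{[G,G^{\psi}]}$ onto the generators of $[G/U,(G/U)^{\psi}]$, so that $\pi_U\big(\overline{[G,G^{\psi}]}\big) = [G/U,(G/U)^{\psi}]$, and these images are compatible with the transition maps. Since $\overline{[G,G^{\psi}]}$ is a closed subgroup of the inverse limit $\nu_p(G) = \varprojlim_U \nu(G/U)$, the general fact that a closed subgroup of a profinite group is the inverse limit of its images then gives $\overline{[G,G^{\psi}]} = \varprojlim_U [G/U,(G/U)^{\psi}]$.

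The only step that requires any care is the identification of $\ker\big(\nu_p(G) \to \nu_p(G/U)\big)$ with the closed normal subgroup generated by $U \cup U^{\psi}$. Compared with Lemma~\ref{inverse0}, the defining relators of $\nu$ are more elaborate than the single family $[g,g^{\psi}]$ of $\X$, so one must verify that they descend compatibly under the quotient map; but this is a formal consequence of the pro-$p$ presentation of $\nu_p$, precisely as in the proof of Lemma~\ref{inverse0}.
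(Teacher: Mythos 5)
Your proposal is correct and is essentially the paper's own argument: the paper gives no separate proof of Lemma \ref{inverse}, stating only that ``the proof of Lemma \ref{inverse0} can be adapted,'' and your line-by-line adaptation (including the identification $\nu_p(P)\simeq\nu(P)$ for finite $p$-groups $P$ via Rocco's result that $\nu$ preserves finite $p$-groups, and the factorization of any finite quotient through $\nu(G/U)$ with $U=U_1\cap U_2$) is exactly that adaptation. The treatment of the ``in particular'' clause via the standard fact that a closed subgroup of an inverse limit is the inverse limit of its images is also sound.
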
 

We will need in this section the following homological result.

\begin{lemma} \label{homological} Let $ A \to B \to C$ be a stem extension of pro-$p$ groups i.e. a short exact sequence of pro-$p$ groups such that $A \subseteq \overline{[B,B]} \cap Z(B)$. Then $A$ is a pro-$p$ quotient of $H_2(C, \mathbb{Z}_p)$.
\end{lemma}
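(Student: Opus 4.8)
The plan is to extract the desired surjection from the low-degree (five-term) exact sequence of the Lyndon--Hochschild--Serre spectral sequence associated to the central extension $A \rightarrowtail B \twoheadrightarrow C$, taken with coefficients in the trivial module $\mathbb{Z}_p$. This spectral sequence is available for continuous homology of pro-$p$ groups by the general machinery of \cite{RibZal}; it has $E^2_{i,j} = H_i(C, H_j(A, \mathbb{Z}_p))$ and converges to $H_{i+j}(B, \mathbb{Z}_p)$. Its five-term exact sequence reads
$$H_2(B, \mathbb{Z}_p) \to H_2(C, \mathbb{Z}_p) \to H_0(C, H_1(A, \mathbb{Z}_p)) \to H_1(B, \mathbb{Z}_p) \to H_1(C, \mathbb{Z}_p) \to 0.$$

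First I would identify the middle term. Since $A \subseteq Z(B)$ is central, it is abelian, so $H_1(A, \mathbb{Z}_p) \simeq A$, and the conjugation action of $C$ on $A$ induced by the extension is trivial; hence $H_0(C, H_1(A, \mathbb{Z}_p)) = H_0(C, A) \simeq A$. Under this identification the relevant portion of the sequence becomes the transgression $d_2 : H_2(C, \mathbb{Z}_p) \to A$ followed by the edge map $A \to H_1(B, \mathbb{Z}_p)$.

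Next I would show that the edge map $A \to H_1(B, \mathbb{Z}_p) \simeq B/\overline{[B,B]}$ vanishes. This map is induced by the inclusion $A \hookrightarrow B$ composed with abelianization, so its image is precisely the image of $A$ in $B/\overline{[B,B]}$. The stem hypothesis $A \subseteq \overline{[B,B]}$ forces this image to be trivial, so the edge map is zero. By exactness, the transgression $d_2 : H_2(C, \mathbb{Z}_p) \to A$ is therefore surjective, which is exactly the assertion that $A$ is a pro-$p$ quotient of $H_2(C, \mathbb{Z}_p)$.

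The argument is formally identical to the discrete case, so the only points requiring care are the legitimacy of each homological tool in the pro-$p$ category: the existence and convergence of the LHS spectral sequence for continuous homology, the identification $H_1(A, \mathbb{Z}_p) \simeq A$ for an abelian pro-$p$ group $A$, and the naturality identifying the edge map with the map induced by inclusion. I expect verifying these continuity and convergence points, rather than any genuinely new computation, to be the main (and only mild) obstacle.
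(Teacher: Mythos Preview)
Your proof is correct and is essentially the same as the paper's: both apply the five-term exact sequence of the LHS spectral sequence for $A \rightarrowtail B \twoheadrightarrow C$ with coefficients $\mathbb{Z}_p$, identify the coinvariants term with $A$ via centrality, and use $A \subseteq \overline{[B,B]}$ to force surjectivity of the transgression $H_2(C,\mathbb{Z}_p) \to A$. The only cosmetic difference is that the paper phrases the last step as ``$H_1(B,\mathbb{Z}_p) \to H_1(C,\mathbb{Z}_p)$ is an isomorphism'' rather than ``the edge map $A \to H_1(B,\mathbb{Z}_p)$ vanishes'', which are equivalent readings of the same exact sequence.
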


\begin{proof}
	We consider the beginning of the $5$-term exact sequence associated to the LHS spectral sequence arising from $ A \to B \to C$, i.e. there is an exact sequence
	$$H_2(C, \mb{Z}_p) \to H_0(C, H_1(A, \mb{Z}_p)) \to H_1(B, \mb{Z}_p) \to H_1(C, \mb{Z}_p) \to 0.$$
	Since $A \subseteq \overline{[B,B]}$ and for a pro-$p$ group $G$ we have that $H_1(G, \mathbb{Z}_p) \simeq G/ \overline{[G,G]}$ we deduce that $H_1(B, \mb{Z}_p) \to H_1(C, \mb{Z}_p)$ is an isomorphism of pro-$p$ groups. Thus the map $H_2(C, \mb{Z}_p) \to H_0(C, H_1(A, \mb{Z}_p))$ is an epimorphism. Since $A \subseteq  Z(B)$ we deduce that the $C$ action (via conjugation) on $A$ is trivial and $A$ is abelian, hence $H_0(C, H_1(A, \mb{Z}_p)) \simeq H_1(A, \mb{Z}_p) \simeq A$.	
\end{proof}

Let $\Delta_p(G)$ be the pro-$p$ subgroup of $\nu_p(G)$ (topologically) generated by $\{ [g, g^{\psi}] \mid g \in G \}$. 
The same commutator calculations from \cite[Lemma~2.1]{Norai} show that 
	\begin{equation} \label{central} \Delta_p(G) \subseteq \nu_p(G)' \cap Z(\nu_p(G))\end{equation}
and the argument from \cite[p.~69]{Norai} gives that there is an isomorphism of pro-$p$ groups \begin{equation} \label{iso99}  \X_p(G)/ R_p(G) \simeq \nu_p(G)/ \Delta_p(G)  \end{equation} induced by the map that is identity on $G \cup G^{\psi}$. By (\ref{central}), (\ref{iso99}) and Lemma \ref{homological} we have that $\Delta_p(G)$ is a quotient of $H_2(\nu_p(G)/ \Delta_p(G), \mathbb{Z}_p)$, thus
\begin{equation} \label{quotient}  \Delta_p(G) \hbox{ is a quotient of } H_2(\X_p(G)/ R_p(G), \mathbb{Z}_p). \end{equation} The short exact sequence of pro-$p$ groups
$ 1 \to W_p(G)/ R_p(G) \to \X_p(G)/ R_p(G) \to \X_p(G)/ W_p(G) \to 1 $ can be written as
\begin{equation}  \label{eq-neu} 1 \to H_2(G, \mathbb{Z}_p) \to \X_p(G)/ R_p(G) \to Im (\rho_p) \to 1  \end{equation}

\begin{lemma}
	Let $G$ be a pro-$p$ group. 
	 Then the map $\varphi : G \widehat{\otimes} G \to \overline{[G, G^{\psi}]}$ given by $g_1 \widehat{\otimes} g_2 \to [g_1, g_2^{\psi}]$ is an isomorphism between the non-abelian pro-$p$ tensor square  $ G \widehat{\otimes} G$ and the 	pro-$p$ subgroup $\overline{[G, G^{\psi}]}$ of $\nu_p(G)$.	
\end{lemma}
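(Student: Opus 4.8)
The plan is to prove that $\varphi$ is a continuous isomorphism by first checking it is a well-defined continuous epimorphism and then obtaining injectivity from an inverse limit argument that reduces everything to Rocco's classical result for finite $p$-groups.

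\textbf{Well-definedness and surjectivity.} First I would verify that the assignment $g_1 \widehat{\otimes} g_2 \mapsto [g_1, g_2^{\psi}]$ respects the defining relations \eqref{tensor1} and \eqref{tensor2} of the non-abelian tensor square. This is precisely why the relations of $\nu_p(G)$ are chosen as they are: the defining relation $[g_1, g_2^{\psi}]^{g_3} = [g_1^{g_3}, (g_2^{g_3})^{\psi}]$ together with the commutator identities \eqref{commutator001} yields, for example, $[g_1 g, h^{\psi}] = [g_1, h^{\psi}]^{g}[g, h^{\psi}] = [g_1^{g}, (h^{g})^{\psi}][g, h^{\psi}]$, which is exactly the image of \eqref{tensor1}; the relation corresponding to \eqref{tensor2} is obtained symmetrically. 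Hence $\varphi$ is a well-defined continuous homomorphism, and since $\overline{[G, G^{\psi}]}$ is by definition the closed subgroup topologically generated by the elements $[g_1, g_2^{\psi}]$, it is surjective. The whole content is therefore injectivity.

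\textbf{Reduction to finite $p$-groups.} For a finite $p$-group $P$, Rocco proved in \cite{Norai} that the analogous map $P \otimes P \to [P, P^{\psi}] \subseteq \nu(P)$, $\bar{g}_1 \otimes \bar{g}_2 \mapsto [\bar{g}_1, \bar{g}_2^{\psi}]$, is an isomorphism; here $P \otimes P$ is finite, being a subgroup of the finite group $\nu(P)$. Writing $G = \varprojlim_U G/U$ over the open normal subgroups $U$, these discrete isomorphisms $\varphi_{G/U}$ are compatible with the transition maps induced by the projections $G/U' \twoheadrightarrow G/U$, and by Lemma \ref{inverse} their common target assembles to $\varprojlim_U [G/U, (G/U)^{\psi}] = \overline{[G, G^{\psi}]}$. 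Since $\varphi$ sends $g_1 \widehat{\otimes} g_2$ to $[g_1, g_2^{\psi}]$, it factors as the natural map $G \widehat{\otimes} G \to \varprojlim_U \big( (G/U) \otimes (G/U) \big)$ followed by $\varprojlim_U \varphi_{G/U}$. As each $\varphi_{G/U}$ is an isomorphism, it remains only to prove that this first map is an isomorphism.

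\textbf{The inverse limit description of the tensor square.} The key step, and the one I expect to be the main obstacle, is to establish
\[ G \widehat{\otimes} G \simeq \varprojlim_U \big( (G/U) \otimes (G/U) \big). \]
I would prove this exactly as in Lemma \ref{inverse0} and Lemma \ref{inverse}. Functoriality of the tensor square gives continuous epimorphisms $G \widehat{\otimes} G \twoheadrightarrow (G/U) \widehat{\otimes} (G/U)$, and by the argument of Lemma \ref{finite-p} one identifies $(G/U) \widehat{\otimes} (G/U)$ with the discrete tensor square $(G/U) \otimes (G/U)$. Assembling these yields a continuous epimorphism onto the inverse limit, and to see it is injective it suffices to show that every finite $p$-group quotient $V$ of $G \widehat{\otimes} G$ factors through some $(G/U) \otimes (G/U)$. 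Given a quotient map $\mu$, the delicate point is to produce a single open $U$ for which all generators $\mu(g \widehat{\otimes} h)$ depend only on the cosets $gU, hU$ and for which the relations \eqref{tensor1} and \eqref{tensor2} descend consistently; this is where the continuity of the pro-$p$ presentation of $G \widehat{\otimes} G$ enters, and it runs parallel to the choice of $U = U_1 \cap U_2$ in the proof of Lemma \ref{inverse0}. Once $V$ is realized as a quotient of $(G/U) \otimes (G/U)$, injectivity of the limit map follows, and combining with the previous paragraph we conclude that $\varphi$ is an isomorphism. (An alternative would be to build an inverse directly by mimicking Rocco's semidirect-product construction from \cite{Norai} in the pro-$p$ category, but the inverse limit route above is cleaner given the machinery already in place.)
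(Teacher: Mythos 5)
Your proof follows essentially the same route as the paper's: check well-definedness from the defining relations, identify both sides as inverse limits over the finite quotients $G/U$, and invoke Rocco's isomorphism $P \otimes P \simeq [P,P^{\psi}]$ for finite $p$-groups. The one step you single out as the main obstacle, namely $G \widehat{\otimes} G \simeq \varprojlim_U \bigl( (G/U) \otimes (G/U) \bigr)$, is not reproved in the paper but simply quoted from Moravec \cite{Mor}, so your sketch of it (which is plausible but left incomplete at the ``delicate point'' of choosing a single $U$) is extra work rather than a divergence in strategy.
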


\begin{proof} The fact that $\varphi$ is well-defined follows from the defining relations of $G \widehat{\otimes} G$  and $\nu_p(G)$.
	Since $G \widehat{\otimes} G$ is the inverse limit of $G/U {\otimes} G/U$ \cite{Mor} and $\overline{[G, G^{\psi}]}$ is the inverse limit of $[G/U, (G/U)^{\psi}]$, where both inverse limits are over the set of all finite quotients $G/ U$ of $G$, it suffices to use the fact that for finite $p$-groups $P = G/ U$ there is an isomorphism $P {\otimes} P \to [P, P^{\psi}]$ given by $p_1 {\otimes} p_2 \to [p_1, p_2
	]$ by \cite[Prop.~2.6]{Norai}.
\end{proof}

{\bf Proof of Theorem C}

	By Lemma \ref{finite-p}, Corollary \ref{fin-pres}, Proposition \ref{nilpotent1}, Proposition \ref{Solv} and Proposition \ref{anal}  $\X_p(G) \in {\mathcal{P}}$. 
	
	1. Suppose $\mathcal P$ is not the class of finitely presented pro-$p$ groups. Then $\nu_p(G)/ \Delta_p(G) \simeq \X_p(G)/ R_p(G) \in {\mathcal P}$. 
	
	1.1 If $\mathcal P$ is the class of finitely generated nilpotent groups or the class of soluble groups and since $\Delta_p(G)$ is central in $\nu_p(G)$, we get that $\nu_p(G) \in {\mathcal P}$.
	
	1.2. If ${\mathcal P}$ is the class of poly-procyclic pro-$p$ groups or more generally the class of analytic pro-$p$ groups we have $H_2(\X_p(G)/ R_p(G), \mathbb{Z}_p) \in \mathcal{P}$ and by (\ref{quotient}) we get $\Delta_p(G) \in \mathcal{P}$. Thus $\nu_p(G) \in {\mathcal P}$.
	
	Finally in both cases 1.1 and 1.2 ${\mathcal P}$ is subquotient closed, thus $ G \widehat{\otimes} G \simeq \overline{[G, G^{\psi}]}  \in {\mathcal P}$.
	
	2. Suppose that ${\mathcal P}$ is the class of finitely presented pro-$p$ groups. 
	Then $G$ is a finitely presented pro-$p$ group, so $H_2(G, \mathbb{Z}_p)$ is finitely generated abelian  pro-$p$ group. Furthermore by  Corollary A applied for $k = 2$ and the fact that for a pro-$p$ group $FP_2$ and finite presentability are the same, $Im (\rho_p)$ is a finitely presented pro-$p$ group.  Thus by (\ref{eq-neu}) $\X_p(G)/ R_p(G)$ is finitely presented,
	 hence $H_2(\X_p(G)/ R_p(G), \mathbb{Z}_p)$ is a finitely generated abelian pro-$p$ group and by (\ref{quotient}) $\Delta_p(G)$ is a finitely generated abelian group. By (\ref{iso99}) $\nu_p(G)/ \Delta_p(G)$ is a finitely presented pro-$p$ group and this implies that $\nu_p(G)$ is finitely presented too. This completes the proof of Theorem C.
	 
	 \medskip
 In \cite{Mor2} Moravec proved that the non-abelian tensor product of powerful $p$-groups acting powerfully and compatibly upon each other is again a powerful $p$-group. In particular if $G$ is a powerful pro-$p$ group, every finite quotient $G/ U$ is a powerful $p$-group, hence $G/U \otimes G/U$ is a powerful $p$-group. Since $G \widehat{\otimes} G$ is the inverse limit of $G/U \otimes G/U$, where $U$ runs through all finite index normal subgroups, we deduce that $G \widehat{\otimes} G$ is a powerful pro-$p$ group.
Still $\nu_p(G)$ does not need to be powerful. For example,   consider $G = \mathbb{Z}_p/ p \mathbb{Z}_p$ for $p > 2$. Then $\nu_p(G) = \nu(G)$ is a nilpotent of class 2 $p$-group with  commutator $[G, G^{\psi}] \simeq \mathbb{Z}_p/ p \mathbb{Z}_p$, $\nu_p(G)$ has exponent $p$ and is not abelian. Thus $\nu_p(G)$ is not powerful.

\end{section}

\begin{section}{\texorpdfstring{$\X_p$}{Xp} does not preserve \texorpdfstring{$FP_3$}{FP3}}
We begin by proving an auxiliary result about $R_p(G)$, which may be of independent interest.

\begin{prop}  \label{Rtrivial}
If $G$ is a $2$-generated pro-$p$ group, then $R_p(G) = 1$.
\end{prop}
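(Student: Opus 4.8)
The plan is to reduce to the discrete finite case and there exploit the nilpotency of finite $p$-groups. First I would use the inverse limit description $\X_p(G) = \varprojlim \X(G/U)$ from Lemma \ref{inverse0}: arguing exactly as in Lemma \ref{genL}, the projection $\pi_U : \X_p(G) \twoheadrightarrow \X(G/U)$ carries $G$, $G^\psi$ and (by Lemma \ref{genL}) $L_p(G)$ onto $G/U$, $(G/U)^\psi$ and $L(G/U)$, hence carries $R_p(G) = \overline{[G,[L_p(G),G^\psi]]}$ onto $R(G/U)$. Since a closed subgroup of a profinite group is the inverse limit of its images, this gives $R_p(G) = \varprojlim R(G/U)$. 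As $G$ is $2$-generated, every finite quotient $P = G/U$ is a $2$-generated finite $p$-group, so it suffices to prove $R(P) = 1$ for such $P$.

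Next I would record two reductions inside the finite $p$-group $\X(P)$. Because $R(P) \subseteq W(P)$ it is abelian, and it is normal in $\X(P)$ and generated as such by the elements $\rho(g,h,k) := [[g^{-1}g^\psi, h^\psi], k]$ with $g,h,k \in P$; these are precisely the elements appearing in Lemma \ref{calculation}, whose proof in fact yields $\rho(g,h,k) = [[g,h^\psi],k]^{-1}[[g,h],k^\psi]$. The goal is then to show every $\rho(g,h,k)$ is trivial. I would pass to the abelian group $\overline R := R(P)/[R(P),\X(P)]$ of $\X(P)$-coinvariants, which is then generated by the images $\bar\rho(g,h,k)$, and study the induced map $\bar\rho : P \times P \times P \to \overline R$.

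The heart of the argument is to prove that $\bar\rho$ is trilinear and alternating, so that it factors through $\wedge^3 (P/[P,P])$. Multiplicativity in each variable is where working modulo $[R(P),\X(P)]$ pays off: expanding $\rho$ under $k \mapsto k_1k_2$, under $h \mapsto h_1h_2$ (using $(h_1h_2)^\psi = h_1^\psi h_2^\psi$), and under $g \mapsto g_1 g_2$ (using the cocycle identity $(g_1g_2)^{-1}(g_1g_2)^\psi = ((g_1^{-1}g_1^\psi)^{g_2})(g_2^{-1}g_2^\psi)$) produces the desired product together with conjugation ``error terms'', and each error term is a conjugate of an element of $R(P)$ and hence vanishes in $\overline R$; the relation $[D_p,L_p]=1$ of Proposition \ref{DLcommute} is then used to discard the remaining cross-commutators. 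Vanishing on the diagonal $g=h$ is immediate from $[g^{-1}g^\psi,g^\psi]=1$, itself a consequence of the defining relator $[g,g^\psi]=1$. Granting trilinearity and full skew-symmetry, $\bar\rho$ factors through $\wedge^3(P/[P,P])$; since $P$ is $2$-generated, $P/[P,P]$ is generated by two elements and $\wedge^3(P/[P,P]) = 0$, so $\bar\rho \equiv 0$. This says exactly $R(P) = [R(P),\X(P)]$, and because $\X(P)$ is a nilpotent finite $p$-group a normal subgroup equal to its own commutator with the whole group must be trivial; hence $R(P)=1$, and therefore $R_p(G) = \varprojlim R(P) = 1$.

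The main obstacle is the skew-symmetry of $\bar\rho$ in its last two arguments (equivalently $\bar\rho(g,h,h)=0$): unlike the first two slots this does not follow directly from a defining relator and requires the full Sidki commutator calculus of \cite[Lemma~4.1.6]{Sid} --- a Hall--Witt/Jacobi manipulation combined with $[D_p,L_p]=1$ --- carried out so that the identities hold honestly in $\X(P)$ modulo $[R(P),\X(P)]$ rather than merely modulo $R(P)$, which would be circular. Indeed, antisymmetry in the first two slots together with the Jacobi relation alone only cuts $\bar\rho$ down to the degree-$3$ part of the free Lie functor, which is nonzero on two generators; it is precisely the extra skew-symmetry that forces the target to be $\wedge^3(P/[P,P])$ and makes $2$-generation decisive.
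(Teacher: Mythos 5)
Your reduction to finite $2$-generated $p$-groups via $\X_p(G)=\varprojlim \X(G/U)$ is sound, and the coinvariants framework (trilinearity of $\bar\rho$ modulo $[R(P),\X(P)]$, factoring through $\wedge^3(P/[P,P])=0$, then $R(P)=[R(P),\X(P)]$ forcing $R(P)=1$ by nilpotency) is a legitimate and genuinely different architecture from the paper's. The multiplicativity claims in the three slots do check out: the error terms produced by the expansions are either conjugates of elements of $R(P)$, hence harmless in the coinvariants, or lie in $[L,D]=1$ once one verifies (as one must, and you do not) that the relevant conjugating discrepancies actually land in $D$. But the proof has a genuine gap at the one point where all the difficulty of the proposition is concentrated: the vanishing $\bar\rho(g,h,h)=0$, i.e.\ the skew-symmetry in the last two slots. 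You explicitly write ``granting trilinearity and full skew-symmetry'' and then, in your final paragraph, identify this skew-symmetry as the main obstacle and describe the kind of Hall--Witt manipulation that ``would be required'' --- without carrying it out. Naming the missing computation is not the same as performing it, and without it your argument only bounds $\bar\rho$ by the degree-$3$ part of the free Lie functor, which, as you yourself observe, is nonzero on two generators. So the proposal does not prove the proposition.

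It is worth noting that the identity you are missing is exactly the content of the paper's Lemma \ref{Z2}: the Hall--Witt computation there shows that $[x,[yy^{-\psi},x^{\psi}]]=1$ holds \emph{exactly} in $\X(G)$ (not merely modulo $[R,\X]$), which in your notation is precisely $\rho(y^{-1},x,x)=1$, the repeated-last-two-slots case. The paper's route is: Lemma \ref{Z1} reduces the normal generators of $R(G)$ to the elements $[x_1,[yy^{-\psi},x_2^{\psi}]]$ with $x_1,x_2,y$ in a symmetric generating set (this plays the role of your trilinearity, done once and for all at the level of honest generators rather than coinvariants), and then for a $2$-generated group every such triple has a repeated letter, so each generator is killed either by the defining relator (your $g=h$ diagonal) or by the Lemma \ref{Z2} computation. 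In other words, once the Hall--Witt identity you defer is actually established, the coinvariants, $\wedge^3$, and nilpotency scaffolding become unnecessary; and without it, your proof is incomplete at its crux. To repair the proposal you would need to write out that computation --- using $[x,y^{\psi}]=[x^{\psi},y]$, $[D,L]=1$ and the Hall--Witt identity as in the paper --- and check it yields the vanishing exactly or at least modulo $[R(P),\X(P)]$ in a non-circular way.
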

 
This can be obtained as a corollary of the analogous result for discrete groups, which is a consequence of Lemma \ref{Z1}. The discrete case of Proposition \ref{Rtrivial} was first proved with different methods by Bridson and Kochloukova in 
\cite{BriKoc2}. We write $y^{- \psi}$ for $(y^{\psi})^{-1}$.

\begin{lemma} \label{Z1}
 Let $G$ be a discrete group and let $X \subset G$ be a generating set. Suppose that $X$ is symmetric (with respect to inversion). Then $R(G) = [G, [L, G^{\psi}]]$ is the normal subgroup of $\X(G)$ generated by the set
 \[ Z = \{ [x_1, [yy^{-\psi}, x_2^{\psi}]] \ \ | \ \ x_1, x_2, y \in X\}.\] 
\end{lemma}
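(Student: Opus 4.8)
The plan is to prove the equality by the two inclusions, writing $\ell_g = g^{-1}g^{\psi}$ for the standard generators of $L = L(G)$ and letting $N$ denote the normal closure of $Z$ in $\X(G)$. The inclusion $N \subseteq R(G)$ is the easy one. Since $X$ is symmetric, for $y \in X$ we have $yy^{-\psi} = (y^{-1})^{-1}(y^{-1})^{\psi} = \ell_{y^{-1}} \in L$, so $[yy^{-\psi}, x_2^{\psi}] \in [L, G^{\psi}]$ and hence each generator $[x_1, [yy^{-\psi}, x_2^{\psi}]]$ of $Z$ lies in $[G, [L, G^{\psi}]] = R(G)$. Because $R(G)$ is normal in $\X(G)$ — the discrete analogue of Proposition \ref{DLcommute}, i.e. $[D,L]=1$, established by Sidki in \cite{Sid} — its normality forces the whole normal closure $N$ of $Z$ to sit inside $R(G)$.

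For the reverse inclusion $R(G) \subseteq N$ I would work in $\X(G)/N$ and show that each subgroup generator $[g_1, [\ell_g, g_2^{\psi}]]$ of $R(G)$ (with $g_1,g_2,g \in G$) becomes trivial there; since $N$ is normal it suffices to reduce the three ``slots'' $g_1,g,g_2$ to letters of $X$. The outer slot reduces cleanly: writing $c = [\ell_g, g_2^{\psi}]$ and using $[ab,c]=[a,c]^b[b,c]$ gives $[g_1 g_1', c] = [g_1,c]^{g_1'}[g_1',c]$, and as $N$ is normal the conjugation by $g_1' \in G$ is harmless, so an induction on the length of $g_1$ as a word in $X$ suffices. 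The $L$-slot is reduced by the crossed-homomorphism formula $\ell_{ab} = \ell_a^{\,b}\,\ell_b$, and the $G^{\psi}$-slot by $[\ell,(h_1h_2)^{\psi}] = [\ell,h_2^{\psi}][\ell,h_1^{\psi}]^{h_2^{\psi}}$ together with $[a,bc]=[a,c][a,b]^c$. At each step the word in $X$ is shortened by one letter, producing a basic term $[x_1,[\ell_x,x_2^{\psi}]]$ — which is exactly a generator of $Z$ after matching $\ell_x$ with $yy^{-\psi}$ via the symmetry of $X$ — times correction terms that are conjugates of commutators already treated by the induction.

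The heart of the matter, and the step I expect to be the main obstacle, is checking that these correction terms really lie in $N$ and not merely in the larger group $R(G)$. Conjugations by $G$ are absorbed by normality of $N$, but the interaction of the two copies is genuine: conjugating $h^{\psi}$ by $b \in G$ does not return $(h^b)^{\psi}$, so expanding the $G^{\psi}$-slot forces $\psi$-conjugation of the outer $g_1$ and one must convert between the two halves. The tools for this are the relation $[x,y^{\psi}]=[x^{\psi},y]$ from \cite[Lemma~4.1.6]{Sid} and the discrete analogue of Lemma \ref{calculation}, namely $[[g,h^{\psi}],k] \equiv [[g,h],k^{\psi}] \pmod{R(G)}$. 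To avoid circularity — we are trying to prove $R(G) \subseteq N$, so we cannot invoke identities valid only ``modulo $R(G)$'' — I would re-run the computation of Lemma \ref{calculation} tracking the normal closure $N$ in place of $R(G)$. The error terms it produces are precisely commutators $[[\ell_x,h^{\psi}],k]$, and once $h,k$ are themselves reduced to letters of $X$ these are (inverses of) generators of $Z$, hence lie in $N$. This upgrades the congruence to hold modulo $N$, closes the induction, and yields $R(G) \subseteq N$, completing the proof.
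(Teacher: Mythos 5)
Your proposal is correct and follows essentially the same route as the paper: the easy inclusion via normality of $R(G)$, and the reverse inclusion by a slot-by-slot reduction to letters of $X$ using the standard commutator identities, with $[D,L]=1$ absorbing the cross terms. The only cosmetic differences are that you split the $L$-slot via $\ell_{ab}=\ell_a^{\,b}\ell_b$ where the paper instead conjugates $(uv)(uv)^{-\psi}$ into $vv^{-\psi}uu^{-\psi}$, and your anticipated detour through the discrete analogue of Lemma \ref{calculation} is not actually needed: when the $\psi$-conjugation is pulled off the outer slot one gets $[g[g,h_2^{-\psi}],[\ell,h_1^{\psi}]]$, whose error term $[[g,h_2^{-\psi}],[\ell,h_1^{\psi}]]$ already lies in $[D,L]=1$, exactly as in the paper's computation of the term $r_1$.
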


\begin{proof} The proof relies on commutator calculations that use the following commutator identities
	$$[a,bc] = [a,c]. [a,b]^c \hbox{ and } [ab,c] = [a,c]^b. [b,c]$$
 Recall that $R(G)$ is generated as a subgroup by the elements of the form $r=[g,[\ell,h^{\psi}]]$, with $g,h\in G$ and 
 $\ell \in L = L(G)$. If $g=g_1 g_2$, then $r$ is a consequence of $[g_1,[\ell,h^{\psi}]]$ and $[g_2,[\ell,h^{\psi}]]$. The analogous claim holds ``in the other variable'', that is, for $h=h_1 h_2$. 
 Indeed $[g,[\ell,(h_1 h_2)^{\psi}]]$ is a consequence of $[g,[\ell,h_2^{\psi}]]$ and $$r_1 = [g,[\ell,h_1^{\psi}]^{h_2^{\psi}}] =  [g [g, h_2^{- \psi}],[\ell,h_1^{\psi}]]^{h_2^{\psi}} = [g,[\ell,h_1^{\psi}]]^{ [g, h_2^{- \psi}] h_2^{\psi}},$$
 where the last equality follows from the fact that $[[g, h_2^{- \psi}],[\ell,h_1^{\psi}]] \in [D,L] = 1$, where $D = D(G)$.
 
 If $ \ell = \ell_1 \ell_2$, with 
 $\ell_2=yy^{-\psi}$, then by applying the commutator formulas we obtain that $r$ is a consequence of $[g,[yy^{-\psi},h^{\psi}]]$ and $r_2=[g,[\ell_1,h^{\psi}]^{yy^{-\psi}}]$. But
\[r_2 = [g, ([\ell_1,h^{\psi}] [ [\ell_1,h^{\psi}],y ] )^{y^{-\psi}}],\]
 thus $r_2$ is a consequence of $[y,[\ell_1,h^{\psi}]]$ and $r_3=[g,[\ell_1,h^{\psi}]^{y^{-\psi}}]$. Again
\[ r_3 = [g[g,y^{\psi}],[\ell_1,h^{\psi}]]^{y^{-\psi}}  = [g,[\ell_1,h^{\psi}]]^{[g,y^{\psi}] y^{-\psi}},\]
where the last equality follows from the fact that $[[g,y^{\psi}],[\ell_1,h^{\psi}]] \in [D,L] = 1$.
A similar argument works for $ \ell = \ell_1 \ell_2$, with 
$\ell_2=(yy^{-\psi})^{-1}$.

Finally, if $r=[g, [bb^{-\psi},h^{\psi}]]$ for some $b=uv \in G$, then 
\[ r = [g, [(vv^{-\psi} u^{-\psi}u)^{u^{-1}}, h^{\psi}]] = [g', [vv^{-\psi} uu^{-\psi}, (h^{\psi})^u]]^{u^{-1}}\] 
for $g' = g^u$. But 
\[ [vv^{-\psi} uu^{-\psi}, (h^{\psi})^u] = [vv^{-\psi} uu^{-\psi}, [u,h^{-\psi}] h^{\psi}] = [vv^{-\psi} uu^{-\psi}, h^{\psi}],\]
since $ [vv^{-\psi} uu^{-\psi}, [u,h^{-\psi}]] \in [L,D]=1$. Thus $r$ is a consequence of $[g', [vv^{-\psi} uu^{-\psi}, h^{\psi}]]$, and we fall in the previous case, that is, $r$ is a consequence of $[g', [vv^{-\psi}, h^{\psi}]]$ and $[g', [uu^{-\psi}, h^{\psi}]]$.

The arguments above imply that any $r=[g,[\ell, h^{\psi}]]$ is a consequence of the elements of $Z$ i.e. is in the normal subgroup generated by $Z$.
\end{proof}

\begin{lemma} \label{Z2} If $G$ is a discrete group generated by $\{x,y\}$, then the generators of $R(G) $ as a normal subgroup in   $\X(G)$, given by Lemma \ref{Z1}, are all trivial i.e. $Z = 1$. \end{lemma}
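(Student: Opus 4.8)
The plan is to fix the symmetric generating set $X = \{x, x^{-1}, y, y^{-1}\}$ and to kill each generator $[x_1, [\ell_c, x_2^\psi]]$ of $Z$, where I abbreviate $\ell_c := c c^{-\psi}$. First I would record two reductions. Arguing exactly as in the reduction steps of the proof of Lemma \ref{Z1} (using $[ab,c]=[a,c]^b[b,c]$, $[a,bc]=[a,c][a,b]^c$ and $[D,L]=1$), the assignment is ``multiplicative modulo the remaining generators'' in each of the three slots, so it suffices to treat $x_1, x_2, c \in \{x, y\}$ and then let triviality propagate to the inverse letters. Moreover, whenever $x_2 \in \{c, c^{-1}\}$ the inner commutator already vanishes: from the defining relation $[c, c^\psi]=1$ one gets $[\ell_c, c^\psi] = [c,c^\psi]^{c^{-\psi}}[c^{-\psi}, c^\psi] = 1$, and similarly $[\ell_c, (c^{-1})^\psi]=1$. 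Combined with the $x \leftrightarrow y$ symmetry of the construction, this leaves exactly two representative generators to kill, namely $[x, [\ell_x, y^\psi]]$ (``matching'' case, where $x_1$ lies in the same block as $c$) and $[y, [\ell_x, y^\psi]]$ (``mismatched'' case).

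For the matching case I would argue cleanly as follows. Since $x$ commutes with $x$ and with $x^{-\psi}$ (the latter because $[x, x^\psi]=1$), it centralizes $\ell_x$, so $\ell_x^x = \ell_x$ and hence $[\ell_x, y^\psi]^x = [\ell_x, (y^\psi)^x]$. Now $(y^\psi)^x = y^\psi [y^\psi, x]$ with $[y^\psi, x] = [y, x^\psi] \in D$ by the identity $[x_1, x_2^\psi]=[x_1^\psi, x_2]$ (see \cite[Lemma~4.1.6]{Sid}). Expanding $[\ell_x, y^\psi [y^\psi,x]] = [\ell_x, [y^\psi, x]]\,[\ell_x, y^\psi]^{[y^\psi, x]}$ and using $[\ell_x, [y^\psi, x]] \in [L, D]=1$ yields $[\ell_x, y^\psi]^x = [\ell_x, y^\psi]^{[y^\psi, x]}$, so that $[x, [\ell_x, y^\psi]] = [[\ell_x, y^\psi], [y^\psi, x]] \in [L, D] = 1$, the point being that $[\ell_x, y^\psi] \in [L, G^\psi] \subseteq L$ while $[y^\psi, x] \in D$.

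The mismatched generator $[y, [\ell_x, y^\psi]]$ is where the main obstacle lies, since $y$ no longer centralizes $\ell_x$. Here I would again conjugate, exploiting that $(y^\psi)^y = y^\psi$ (as $[y, y^\psi]=1$): one gets $[\ell_x, y^\psi]^y = [\ell_x^y, y^\psi]$, so the only correction now sits in the $L$-slot, $\ell_x^y = \ell_x [\ell_x, y]$. Writing $[\ell_x, y] = [x,y]^{x^{-\psi}}\,[x^{-1}, y^\psi]$ exposes its $D$-part $[x^{-1}, y^\psi]$ and the genuine $G$-commutator $[x,y]$; the plan is then to expand $[\ell_x[\ell_x, y], y^\psi] = [\ell_x, y^\psi]^{[\ell_x, y]}\,[[\ell_x, y], y^\psi]$ and to collapse every resulting term to $1$ by repeated use of $[L,D]=1$, the defining relation, and the identity $[x_1, x_2^\psi]=[x_1^\psi, x_2]$ (alternatively, via a Hall--Witt expansion of $[y, [\ell_x, y^\psi]]$, in which the cyclic term $[[y^\psi, y^{-1}], \ell_x]$ drops out because $y$ and $y^\psi$ commute). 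The delicate bookkeeping needed to verify that this product equals $1$ \emph{exactly}, rather than merely lies in $R(G)$, is the technical heart of the lemma; together with the symmetry and multiplicativity reductions above, the two computations then give $Z = 1$.
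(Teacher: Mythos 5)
Your reduction framework is sound, and your treatment of the ``matching'' generator $[x,[\ell_x,y^{\psi}]]$ is a correct and rather clean argument (conjugating by $x$, which centralizes $\ell_x$, and absorbing the correction $[y^{\psi},x]\in D$ via $[L,D]=1$). The problem is that the generator you defer, $[y,[\ell_x,y^{\psi}]]$ --- which after swapping $x\leftrightarrow y$ is exactly $[x,[yy^{-\psi},x^{\psi}]]$, the one generator the paper computes in full --- is the entire content of the lemma, and you have not proved it. Declaring that ``the delicate bookkeeping\dots is the technical heart of the lemma'' and stopping there leaves the proof incomplete precisely at its heart.

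Moreover, the plan you sketch for that case does not obviously close with the tools you list. Expanding $[\ell_x^y,y^{\psi}]=[\ell_x[\ell_x,y],y^{\psi}]$ produces the factor $[[\ell_x,y^{\psi}],[\ell_x,y]]$, and neither entry sits in $D$ or $L$ in a way that $[D,L]=1$ can exploit: $[\ell_x,y^{\psi}]$ has a component in $[G^{\psi},G^{\psi}]$ and $[\ell_x,y]$ has the ``genuine'' component $[x,y]^{x^{-\psi}}$ that you yourself single out, so repeated use of $[D,L]=1$, the defining relators and $[a,b^{\psi}]=[a^{\psi},b]$ does not visibly kill it. The paper's computation is genuinely more involved: it applies the Hall--Witt identity twice (once to the triple $y^{-\psi},x^{\psi},x$ and once to $y^{-\psi},x,x$) to establish $[x,[y^{-\psi},x^{\psi}]]=[x,[y^{-\psi},x]]$, and then recognizes the resulting product as the expansion of $1=[x,[y^{\psi}y^{-\psi},x]]$. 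Your parenthetical mention of ``a Hall--Witt expansion of $[y,[\ell_x,y^{\psi}]]$'' gestures in the right direction but is a single unexecuted step, not the two-stage argument that is actually needed. Until that computation is carried out, the claim $Z=1$ is not established.
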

\begin{proof}  Note that we can take $X = \{x, y,x^{-1}, y^{-1}\}$. Consider the generator 
\[ r = [x, [yy^{-\psi},x^{\psi}]].\]
 By the standard commutator identities we have
\begin{equation} \label{ball1}   r = [x,[y^{-\psi},x^{\psi}]] [x, [y,x^{\psi}]^{y^{-\psi}}]^{[y^{-\psi},x^{\psi}]}. \end{equation} 
Using the identity $[x,y^{\psi}]=[x^{\psi},y]$ we deduce that $[x, [y,x^{\psi}]^{y^{-\psi}}] = [x, [y^{\psi},x]^{y^{-\psi}}]$. Furthermore since $x^{-1} x^{\psi} \in L$ we have $[y^{-\psi},x^{\psi}] \in [y^{-\psi},x ] L$. Then since $[D,L]=1$ and $[x, [y^{\psi},x]^{y^{-\psi}}] \in D$, we can reduce the 
second term of the above product in  (\ref{ball1}) to $[x, [y^{\psi},x]^{y^{-\psi}}]^{[y^{-\psi},x]}.$ 

Applying the Hall-Witt identity, we can see that the first term of the above product in  (\ref{ball1}) reduces to $[x,[y^{-\psi},x]]$.
Indeed
\[ 1 = [[y^{-\psi},x^{\psi}],x]^{x^{-\psi}} [[x^{-\psi}, x^{-1}], y^{-\psi}]^x [[x,y^{\psi}],x^{-\psi}]^{y^{-\psi}},\]
so 
\[ [x,[y^{-\psi},x^{\psi}]] = ([[y^{-\psi},x^{\psi}],x])^{-1} = [[x,y^{\psi}],x^{-\psi}]^{y^{-\psi}x^{\psi}}.\]
%
Now using twice that $[D,L]=1$ we have 
\[ [x,[y^{-\psi},x^{\psi}]] = [[x,y^{\psi}],x^{-\psi}]^{y^{-\psi}x^{\psi}} = [[x,y^{\psi}],x^{-1}]^{y^{-\psi}x^{\psi}} = [[x,y^{\psi}],x^{-1}]^{y^{-\psi}x}.\]
Similarly, the Hall-Witt identity gives
\[[x,[y^{-\psi},x]] = ([[y^{-\psi},x],x])^{-1} = [[x,y^{\psi}],x^{-1}]^{y^{-\psi}x},\]
so $[x,[y^{- \psi}, x^{\psi}]] = [x,[y^{- \psi}, x]]$ and by (\ref{ball1}) we get 
\begin{equation} \label{ball2} r = [x, [y^{- \psi},x]].[x, [y^{\psi},x]^{y^{- \psi}} ]^{[y^{- \psi},x]} \end{equation}
Using the commutator formulas and (\ref{ball2}) we obtain
$$1 = [x, [y^{\psi} y^{-\psi},x]] = [x, [y^{\psi},x]^{y^{- \psi}} . [y^{- \psi},x]] =[x, [y^{- \psi},x]].[x, [y^{\psi},x]^{y^{- \psi}} ]^{[y^{- \psi},x]} = r$$
All the other generators of $R(G)$, given by Lemma \ref{Z1}, are either immediately trivial or can be shown to be trivial exactly as above. So $R(G)=1$. \end{proof}

\noindent
{\it Proof of Proposition \ref{Rtrivial}} Let $\widehat{F}$ be the free pro-$p$ group on two generators and $F$ be the free discrete group on two generators. Then we can write $\X_p(\widehat{F})$ as an inverse limit of the quotients $\X(P)$, where each $P$ is a $2$-generated $p$-group and by Lemma \ref{Z2} $R(P) = 1$. Clearly $R(P)$ is the image of $R_p(\widehat{F})$ by the  homomorphism $\X_p(\widehat{F}) \twoheadrightarrow \X(P)$ induced by the epimorphism $\widehat{F} \to P$. Then it follows that $R_p(\widehat{F})$ must be trivial as well. In general, if $G$ is a $2$-generated pro-$p$ group, then $R_p(G)$ is a quotient of $R_p(\widehat{F})$,
so $R_p(G)=1$. This completes the proof of Proposition \ref{Rtrivial}.

\begin{prop} \label{fp333}
 If $G$ is a non-abelian free pro-$p$ group, then $\X_p(G)$ is not of type $FP_3$.
\end{prop}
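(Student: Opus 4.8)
The plan is to prove that $H_3(\X_p(G),\mb{F}_p)$ is infinite-dimensional over $\mb{F}_p$; since for a pro-$p$ group type $FP_3$ is equivalent to the finiteness of $H_i(-,\mb{F}_p)$ for $i\le 3$, this suffices. I would first reduce to the case where $G=F$ is free pro-$p$ of rank exactly $2$: for $F$ of rank $r\ge 2$ the retraction onto a rank-$2$ free factor, together with the functoriality of $\X_p$ (via the universal property used in Lemma~\ref{finite-p}), realizes $\X_p(F_2)$ as a retract of $\X_p(F_r)$; as $H_i(\X_p(F_2),\mb{F}_p)$ is then a direct summand of $H_i(\X_p(F_r),\mb{F}_p)$, failure of $FP_3$ for rank $2$ forces it for all $r$.

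For $F$ free of rank $2$ we have $H_2(F,\mb{Z}_p)=0$, so Theorem~\ref{Schur} gives $W_p(F)=R_p(F)$, and Proposition~\ref{Rtrivial} (this is exactly where the rank-$2$ hypothesis enters) gives $R_p(F)=1$. Hence $W_p(F)=1$, the map $\rho_p$ is injective, and $\X_p(F)\simeq \mathrm{Im}(\rho_p)=:S$. By \eqref{Imrho}, $S$ is the kernel of the surjection $F\times F\times F\to F^{ab}=\mb{Z}_p^2$ sending $(g_1,g_2,g_3)$ to $\overline{g_1g_2^{-1}g_3}$.

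Projection of $S$ to the first two coordinates gives $1\to N\to S\to F\times F\to 1$ with $N\simeq\overline{[F,F]}$ sitting in the third factor; $N$ is free pro-$p$, of cohomological dimension $1$, while $F\times F$ has cohomological dimension $2$. In the LHS spectral sequence $E^2_{s,t}=H_s(F\times F,H_t(N,\mb{F}_p))$ only the rows $t=0,1$ are nonzero, and on the line $s+t=3$ every term except $E^2_{2,1}$ vanishes (since $H_3(F\times F,\mb{F}_p)=0$), with no differential hitting $E^2_{2,1}$ as $H_4(F\times F,\mb{F}_p)=0$; thus $H_3(S,\mb{F}_p)\simeq E^2_{2,1}=H_2(F\times F,H_1(N,\mb{F}_p))$. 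The decisive input is that $H_1(N,\mb{F}_p)\simeq\mb{F}_p[[\mb{Z}_p^2]]=:\bar\Lambda$, the regular module, computed from the Koszul relation-module sequence $0\to N^{ab}\to\mb{Z}_p[[\mb{Z}_p^2]]^2\to\mb{Z}_p[[\mb{Z}_p^2]]\to\mb{Z}_p\to 0$ attached to the free presentation of $F^{ab}$, with $F\times F$ acting through $(g_1,g_2)\mapsto\overline{g_2}-\overline{g_1}\in\mb{Z}_p^2$. By Shapiro's lemma $H_2(F\times F,\bar\Lambda)\simeq H_2(Q_0,\mb{F}_p)$ for $Q_0=\{(g_1,g_2)\in F\times F:\overline{g_1}=\overline{g_2}\}$, and running the same argument for $1\to N\to Q_0\to F\to 1$ identifies $H_2(Q_0,\mb{F}_p)$ with $H_1(F,\bar\Lambda)$, the kernel of $(s_1,s_2)\colon\bar\Lambda^2\to\bar\Lambda$ for the generators $s_i$ of the augmentation ideal of $\bar\Lambda=\mb{F}_p[[s_1,s_2]]$; as $s_1,s_2$ is a regular sequence this kernel is the free cyclic Koszul syzygy, so $H_1(F,\bar\Lambda)\simeq\bar\Lambda$, which is infinite-dimensional. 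Tracing back, $H_3(\X_p(F),\mb{F}_p)$ is infinite and $\X_p(F)$ is not $FP_3$.

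The main obstacle is the module-theoretic heart of the computation: correctly pinning down the $\mb{Z}_p[[F^{ab}]]$-module structure of $H_1(\overline{[F,F]})$, which encodes the infinite generation of the commutator subgroup of $F$, and then carefully tracking the two nested spectral sequences and the Shapiro reduction so as to be certain that the resulting infinite-dimensional contribution genuinely survives to $E^\infty$ rather than being cancelled by a differential.
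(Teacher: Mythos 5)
Your argument is correct, and its structural skeleton coincides with the paper's: both proofs reduce to the free pro-$p$ group $\widehat{F}$ of rank two via a split epimorphism $\X_p(G)\twoheadrightarrow \X_p(\widehat{F})$ (so that $H_3(\X_p(\widehat{F}),\mb{F}_p)$ embeds in, resp.\ is a quotient of, $H_3(\X_p(G),\mb{F}_p)$), and both combine Proposition \ref{Rtrivial}, Theorem \ref{Schur} and $H_2(\widehat{F},\mb{Z}_p)=0$ to get $W_p(\widehat{F})=1$, hence $\X_p(\widehat{F})\simeq Im(\rho_p)=S=\{(g_1,g_2,g_3)\mid g_1g_2^{-1}g_3\in\overline{[\widehat{F},\widehat{F}]}\}$. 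The divergence is in how the failure of $FP_3$ for this subdirect product is established: the paper simply cites Theorem A of \cite{KocSho}, whereas you prove it from scratch by two applications of the LHS spectral sequence plus Shapiro's lemma, identifying $H_3(S,\mb{F}_p)$ with $H_2(\widehat{F}\times\widehat{F},\mb{F}_p[[\mb{Z}_p^2]])\simeq H_1(\widehat{F},\mb{F}_p[[\mb{Z}_p^2]])\simeq\mb{F}_p[[s_1,s_2]]$. I checked the details: the kernel of the projection of $S$ onto the first two coordinates is indeed $\overline{[\widehat{F},\widehat{F}]}$ sitting in the third factor, the quotient acts on its abelianization through $(g_1,g_2)\mapsto\overline{g_2}-\overline{g_1}$, the Koszul identification of the relation module of $\mb{Z}_p^2$ with the free rank-one module is valid because the augmentation ideal is generated by a regular sequence, and the vanishing of all other terms on the relevant total-degree lines (and of all incoming and outgoing differentials) follows from $cd(\widehat{F}\times\widehat{F})=2$ and $cd(\overline{[\widehat{F},\widehat{F}]})=1$, so the infinite-dimensional term survives to $E^\infty$. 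What your route buys is a self-contained proof independent of \cite{KocSho}; what it costs is the module-theoretic bookkeeping that the paper avoids by quoting the known result on subdirect products of free pro-$p$ groups.
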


\begin{proof}
Let $\widehat{F}$ be the free pro-$p$ group of rank $2$. Then $R_p(\widehat{F})=1$, which combined with Theorem \ref{Schur} and  $H_2(\widehat{F}, \mathbb{Z}_p) = 0$ implies that $W_p(\widehat{F}) = 1$ and hence $\X_p( \widehat{F} ) \simeq Im(\rho_p^{\widehat{F}})$. Note that $Im(\rho_p^{\widehat{F}})$ is a subdirect product of 
$\widehat{F} \times \widehat{F} \times \widehat{F}$ which is clearly not of type $FP_3$ (by Theorem A in \cite{KocSho}, for instance) but is $FP_2$ by Corollary A. Then $H_i(\X_p(\widehat{F}), \mathbb{F}_p)$ is finite dimensional for $i \leq 2$ and is infinite dimensional for $i = 3$.

 More generally, if $G$ is any non-abelian free pro-$p$ group, then the epimorphism
$\pi: \X_p(G) \to \X_p(\widehat{F})$ induced by any epimorphism $\gamma: G \twoheadrightarrow \widehat{F}$ is split, i.e.  there is a homomorphism $\sigma : \X_p(\widehat{F}) \to \X_p(G)$ such that  $\pi \circ \sigma = id$ and $\sigma$ is induced by a splitting of $\gamma$. The same holds for the induced homomorphisms on the homologies. In particular $\pi_{\ast}: H_3( \X_p(G); \mb{F}_p) \to H_3(\X_p(\widehat{F}); \mb{F}_p)$ is surjective, thus $H_3( \X_p(G), \mb{F}_p)$ is not finite-dimensional if $H_3(\X_p(\widehat{F}),\mb{F}_p)$ is not. Thus $\X_p(G)$ cannot be of type $FP_3$ either.
\end{proof}

{\bf Proof of Theorem B} It follows by Lemma \ref{finite-p}, Corollary \ref{fin-pres}, Proposition \ref{nilpotent1}, Proposition \ref{Solv}, Proposition \ref{anal} and Proposition \ref{fp333}.
\end{section}

\end{document}